\definecolor{mygray}{gray}{0.85}
\theoremstyle{plain}
\newtheorem{theorem}{Theorem}
\newtheorem{conjecture}[theorem]{Conjecture}
\newtheorem{corollary}[theorem]{Corollary}
\newtheorem{lemma}[theorem]{Lemma}
\newtheorem{definition}[theorem]{Definition}
\theoremstyle{definition}
\newtheorem{remark}[theorem]{Remark}
\newcommand{\kh}{Kh}
\newcommand{\hfk}{\widehat{HFK}}
\newcommand{\Z}{\mathbb{Z}}
\newcommand{\Ztwo}{\mathbb{Z}/ 2\mathbb{Z}}
\newcommand{\Q}{\mathbb{Q}}
\newcommand{\os}{Ozsv\'ath and Szab\'o}
\newcommand{\isom}{\cong}
\title{Symmetric unions without cosmetic crossing changes}
\author{Allison H. Moore}
\date{}                                         
\begin{document}

\maketitle

\begin{abstract}A symmetric union of two knots is a classical construction in knot theory  which generalizes connected sum, introduced by Kinoshita and Terasaka in the 1950s. We study this construction for the purpose of finding an infinite family of hyperbolic non-fibered three-bridge knots of constant determinant which satisfy the well-known cosmetic crossing conjecture. This conjecture asserts that the only crossing changes which preserve the isotopy type of a knot are nugatory. 
\end{abstract}

\section{Introduction}
\label{sec:intro}

In the 1950s, Kinoshita and Terasaka defined the \emph{union} of two knots as a generalization of a connected sum \cite{KinoshitaTerasaka}. An aesthetically appealing variation of this construction is a \emph{symmetric union}, in which the connected sum of a knot and its mirror image is modified by a certain tangle replacement, and the resulting diagram admits an axis of mirror symmetry. In this note we use symmetric unions to construct a new family of knots satisfying a well-known conjecture.

\begin{theorem}
\label{main}
There exists an infinite family of hyperbolic non-fibered three-bridge knots of fixed determinant which satisfy the cosmetic crossing conjecture. 
\end{theorem}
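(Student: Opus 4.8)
The plan is to exhibit the family explicitly through the symmetric union construction and then to translate the topological statement about crossing changes into one about Dehn surgery on the double branched cover, where Heegaard Floer homology supplies the obstruction; the role of the fixed determinant is precisely to control the double branched covers.

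First I would fix a non-fibered two-bridge knot $J$ of determinant $d$ and let $K_n$ denote the symmetric union of $J$ and its mirror $\overline{J}$ with $n$ full twists inserted along the axis of symmetry. Three of the four desired properties are then essentially bookkeeping. Since $K_n$ is assembled from two two-bridge tangles with a twist region localized on the axis, it admits a three-bridge presentation that is stable under the axial twisting. Because a symmetric union is ribbon, its Alexander polynomial is a Fox--Milnor norm whose leading coefficient is governed by $J$ and is a non-unit square; hence $\Delta_{K_n}$ is not monic and $K_n$ is non-fibered. For the determinant, recall $\det(K_n)=|H_1(\Sigma_2(K_n))|$: because $J$ is two-bridge, $\Sigma_2(J)$ is a lens space of order $d$, so $\Sigma_2(K_0)\isom \Sigma_2(J)\connectsum(-\Sigma_2(J))$ has $|H_1|=d^2$, and inserting twists on the axis lifts to $1/n$-surgery on a null-homologous curve $\gamma$, which preserves first homology; thus $\det(K_n)=d^2$ for all $n$. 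Hyperbolicity I would obtain from Thurston's hyperbolic Dehn filling theorem: after verifying once (say with SnapPy, or by ruling out Seifert-fibered and reducible filling slopes) that the complement of the twisting axis in $S^3\setminus K_0$ is hyperbolic, all but finitely many $K_n$ are hyperbolic and are pairwise distinct by volume, giving the infinitude.

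The substance is the cosmetic crossing conjecture. Suppose $K_n$ admitted a cosmetic crossing change at a crossing $c$; I want to conclude $c$ is nugatory. The change is realized by $\pm 1$-surgery on the crossing circle $C$, and being cosmetic means the surgery returns $(S^3,K_n)$. Since $C$ is disjoint from $K_n$ and links it an even number of times, it lifts to a knot or two-component link $\tilde C$ in $\Sigma_2(K_n)$, and the cosmetic crossing change lifts to an equivariant cosmetic Dehn surgery along $\tilde C$. Concretely, writing $M=\Sigma_2(K_n)\setminus\nu(\tilde C)$, the original knot and the crossing-changed knot correspond to two filling slopes of $M$, both yielding $\Sigma_2(K_n)$; if $c$ is non-nugatory these slopes are distinct, so it suffices to show $M$ admits no such cosmetic filling. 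Here the Floer input enters: the covers $\Sigma_2(K_n)$ arise from $1/n$-surgery on the null-homologous $\gamma$ in a connected sum of lens spaces, a setting in which $d$-invariants and the L-space condition are computable through the mapping-cone surgery formula, and I would then invoke the cosmetic surgery obstructions of Ni--Wu and \os{} to rule out the two lifted slopes producing orientation-preservingly homeomorphic fillings.

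I expect the main obstacle to be exactly this last step, carried out uniformly in $n$: one must understand the Floer homology of $\tilde C\subset\Sigma_2(K_n)$ itself, not merely that of the ambient manifold, and push the $d$-invariant (or Casson--Walker) comparison through so as to obstruct a cosmetic filling for every large $n$ simultaneously. A preliminary reduction I would carry out first, following the cosmetic-crossing literature, is to arrange that the crossing is coherent, so that $\tilde C$ is null-homologous in the branched cover and the rational surgery formula applies cleanly. Granting these, for all sufficiently large $n$ the knot $K_n$ is a hyperbolic, non-fibered, three-bridge knot of determinant $d^2$ admitting no non-nugatory cosmetic crossing change, which produces the infinite family of Theorem \ref{main}.
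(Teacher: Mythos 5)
There is a genuine gap at the heart of your argument: the cosmetic crossing obstruction itself. You correctly set up the translation of a crossing change into Dehn surgery on the lift of the crossing circle in $\Sigma(K_n)$, but you then defer the decisive step --- ruling out a cosmetic filling of $\Sigma(K_n)\setminus\nu(\tilde C)$ uniformly in $n$ --- to an unspecified $d$-invariant or Casson--Walker comparison, and you explicitly flag it as the main obstacle. That step is not routine: $\tilde C$ is an arbitrary (unknown) curve coming from an arbitrary crossing of $K_n$, so you have no control over its Floer homology, and the Ni--Wu obstructions concern distinct slopes on a \emph{fixed} knot yielding homeomorphic manifolds, which is not quite the statement you need. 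The paper avoids this entirely by quoting Theorem~\ref{ccc} (Lidman--Moore), which packages exactly this surgery-theoretic analysis into a checkable criterion: $\Sigma(K)$ is an L-space and every summand of $H_1(\Sigma(K);\Z)$ has square-free order. The real work in the paper is then (i) proving $\Sigma(K_n)$ is an L-space by showing $K_n$ is Khovanov thin via an inductive skein-exact-sequence computation (Lemma~\ref{khthin} and Remark~\ref{z2thin}), and (ii) computing $H_1(\Sigma(K_n);\Z)$ by a Goeritz matrix (Lemma~\ref{bdc}), which gives $\Z/7\Z\oplus\Z/7\Z$ precisely when $7\mid n$. Your proposal establishes neither the L-space condition nor the isomorphism type of $H_1$, and without one of these inputs (or a completed direct surgery argument) the conclusion does not follow.

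A related problem is your level of generality. You take $J$ to be any non-fibered two-bridge knot of determinant $d$; but for the square-free-summand criterion one needs $H_1(\Sigma(K_n))$ to decompose into square-free pieces, which forces $d$ itself to be square-free and forces the presentation $\left(\begin{smallmatrix}d & *\\ 0 & d\end{smallmatrix}\right)$ to split, i.e.\ a divisibility condition on $n$ (in the paper, $n\equiv 0\pmod 7$ with $d=7$). Your claim that the axial twist lifts to surgery on a \emph{null-homologous} curve preserving $H_1$ is also false as stated: the order $|H_1|=d^2$ is preserved (this is Lamm's $\det(K_n)=\det(J)^2$), but the group itself changes with $n$, which is exactly the phenomenon the paper exploits. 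Your hyperbolicity argument via Thurston's Dehn filling theorem is a legitimate alternative to the paper's route (Riley's trichotomy for three-bridge knots plus genus and Jones polynomial computations to exclude torus knots and connected sums), and your non-fiberedness argument via non-monicity of $\Delta_{K_n}=\Delta_J^2$ works for even $n$ since two-bridge knots are alternating; but these peripheral points do not repair the missing core.
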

An embedded disk $D$ in $S^3$ intersecting $K$ twice with zero algebraic intersection number is called a \emph{crossing disk}. If $\partial D$ bounds an embedded disk in the complement of $K$, then the corresponding crossing $c$ is called \emph{nugatory} and a crossing change at $c$ preserves the isotopy type of $K$. \emph{Cosmetic} crossing changes are non-nugatory crossing changes which preserve the oriented isotopy type of the knot. The cosmetic crossing conjecture asserts that that no such crossings exist.

\begin{conjecture}[X. S. Lin]\label{conj:ccc}
If $K$ admits a crossing change at crossing $c$ which preserves the oriented isotopy class of the knot, then $c$ is nugatory.
\end{conjecture}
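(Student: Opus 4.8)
Since the final statement is the cosmetic crossing conjecture in full generality, I will outline the general sutured-manifold strategy by which one attacks it and indicate precisely where the essential difficulty lies.

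The plan is to recast a hypothetical cosmetic crossing change as a Dehn-surgery problem. Given a non-nugatory crossing $c$ of $K$ with crossing disk $D$ and crossing circle $L = \partial D$, the crossing change is realized by $\pm 1$ surgery on the unknot $L$, performed with respect to the framing induced by $D$. Writing $M = S^3 \setminus N(K \cup L)$ for the complement of this two-component link, the original knot $K$ and the changed knot $K'$ arise as two Dehn fillings of the $L$-cusp of $M$ that differ by a single twist along $D$. In this language the hypothesis that the change is cosmetic becomes the assertion that these two fillings produce oriented knots $K \cong K'$, while the hypothesis that $c$ is non-nugatory says exactly that $L$ does not bound an embedded disk in $S^3 \setminus N(K)$, i.e. $L$ is essential in the knot complement. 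The goal is to derive a contradiction from the coexistence of these two conditions.

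First I would exploit the fact that $K' \cong K$ forces all knot invariants to agree: in particular the Seifert genus $g$ and the knot Floer homology $\hfk$ are preserved by the twist. Fixing a minimal-genus Seifert surface $S$ for $K$, the next step is to study the sutured manifold obtained by decomposing $S^3 \setminus N(K)$ along $S$, whose sutured Floer homology $\mathrm{SFH}$ recovers the top Alexander grading of $\hfk$. In the model case of a fibered knot, $\mathrm{SFH}$ is one-dimensional in the extremal grading, and this rigidity lets one isotope the crossing circle $L$ into a single fiber, after which the monodromy carries the twist across the fibration and forces $c$ to be nugatory. For a general knot the plan is to show that $L$ may be isotoped to meet $S$ in the minimal geometric number of points, that the taut decomposing surface $S$ survives the twist along $D$, and that the resulting pair of taut sutured decompositions of $M$ must be abstractly identified by the agreement of $\mathrm{SFH}$; one then hopes to conclude that $L$ can be isotoped off $S$ entirely and hence bounds a disk disjoint from $K$.

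The hard part, and the reason the conjecture remains open, is precisely the passage from numerical or homological agreement to an honest ambient isotopy exhibiting $L$ as nugatory. Sutured Floer homology has a nontrivial kernel: distinct essential crossing circles in the same complement can induce identical $\mathrm{SFH}$, so preservation of the invariant under the twist is necessary but not sufficient to force $L$ to bound a disk. Moreover, the hypothesis $K' \cong K$ controls only the knot complement up to homeomorphism, not the isotopy class of $L$; one must additionally rule out exotic self-homeomorphisms of $S^3 \setminus N(K)$ that permute the surgery data and disguise an essential crossing circle as an inessential one. Absent fiberedness, or a bound on the genus, or some other special structure of $K$, no current technique supplies this rigidity in complete generality. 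Closing this gap --- perhaps by a refined taut-foliation argument, or by a Heegaard--Floer surgery-exact-triangle computation sensitive enough to detect the twist region itself --- is the single step on which a full proof must ultimately turn.
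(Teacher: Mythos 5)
There is a fundamental category error here: the statement you were asked about is Conjecture~\ref{conj:ccc}, which is an open conjecture due to X.~S.~Lin, and the paper contains no proof of it --- nor does anyone. Your proposal is therefore not a proof, and to your credit you say so explicitly: your sutured-manifold outline stalls exactly where you say it stalls, at the passage from agreement of invariants (genus, $\hfk$, sutured Floer homology) under the twist to an ambient isotopy exhibiting the crossing circle as bounding a disk in the knot complement. A sketch that correctly identifies its own fatal gap is an honest research summary, but it cannot be spliced in as a proof of the statement, and no amount of refinement along the lines you indicate is known to close the gap in general. Your fibered model case is essentially Kalfagianni's theorem, which the paper cites among the previously known cases, so that portion is sound but already in the literature.

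It is also worth noting how differently the paper actually engages with Conjecture~\ref{conj:ccc}: it makes no attempt at the general statement and instead verifies the conjecture for one new infinite family $\mathcal{K}$ (Theorem~\ref{main}), by an obstruction-theoretic route orthogonal to yours. The engine is Theorem~\ref{ccc} of Lidman--Moore: if $\Sigma(K)$ is an L-space and every summand of $H_1(\Sigma(K);\Z)$ has square-free order, then $K$ admits no cosmetic crossing change. The paper's work then consists of a Khovanov-thinness induction (Lemma~\ref{khthin}, using the unoriented skein exact sequence and the Lee spectral sequence with the vanishing of $s$ for ribbon knots) to show $\Sigma(K_n)$ is an L-space, plus a Goeritz matrix computation (Lemma~\ref{bdc}) giving $H_1(\Sigma(K_n);\Z)\isom \Z/7\Z\oplus\Z/7\Z$ when $7\mid n$. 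If your goal is to prove the conjecture for some class of knots rather than in full generality, that obstruction --- checking thinness and square-freeness of homology summands --- is currently a far more tractable route than forcing rigidity out of sutured Floer homology.
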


The cosmetic crossing conjecture also appears in the literature as the ``nugatory" crossing conjecture; see Problem 1.58 in Kirby's List \cite{Kirby:Problems}. To prove Theorem \ref{main} we will apply an obstruction of the author and Lidman.

\begin{theorem}\cite{LidmanMoore}
\label{ccc}
Let $K$ be a knot in $S^3$ whose branched double cover $\Sigma(K)$ is an L-space. If each summand of the first singular homology of $\Sigma(K)$ has square-free order, then $K$ admits no cosmetic crossing changes.
\end{theorem}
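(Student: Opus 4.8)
The plan is to argue by contradiction: assuming $K$ admits a non-nugatory crossing change preserving its oriented isotopy type, I would transport the entire situation to the branched double cover $Y := \Sigma(K)$, where the L-space hypothesis renders Heegaard Floer homology computable, and there extract a purely cosmetic Dehn surgery that the square-free condition forbids.

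\emph{Reduction to a cosmetic surgery in $Y$.} A crossing change at $c$ is realized by a $\pm 1$-twist supported on the crossing disk $D$, equivalently by $\mp 1$-surgery on the crossing circle $L = \partial D$ in $S^3$. Since $D$ meets $K$ with zero algebraic intersection number, $L$ bounds mod $2$ in the knot complement, so $D$ lifts to an \emph{annulus} $\tilde D$ in $Y$ rather than to a disk; let $\tilde c$ denote the core of $\tilde D$. A standard annulus-twist computation (the even analogue of the Montesinos trick) shows that the lift of the crossing change is a Dehn surgery on $\tilde c$, and that the two crossing states correspond to two distinct surgery slopes $\alpha_0 \neq \alpha_1$ on $\tilde c$, where $\alpha_0$ is the slope recovering $Y$ itself. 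Because the crossing change preserves the oriented isotopy type of $K$, the two branched double covers are orientation-preservingly homeomorphic, so $Y_{\alpha_0}(\tilde c) \cong Y_{\alpha_1}(\tilde c)$; this is a purely cosmetic surgery on $\tilde c$. I would then verify that $c$ being non-nugatory forces $\tilde c$ to be essential (it does not bound a disk in $Y$), so that the surgery is genuinely cosmetic and not a trivial reparametrization.

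\emph{Floer-theoretic obstruction.} As an L-space, $Y$ is a rational homology sphere, so $\tilde c$ is automatically rationally null-homologous and the rational surgery formula of Ni--Wu applies. The L-space condition pins down $HF^+(Y)$ entirely in terms of the correction terms $\{d(Y,\mathfrak s)\}_{\mathfrak s}$, and the mapping-cone formula then expresses the correction terms of $Y_{\alpha_0}(\tilde c)$ and $Y_{\alpha_1}(\tilde c)$ in terms of these together with the knot Floer data of $\tilde c$. The homeomorphism $Y_{\alpha_0}(\tilde c)\cong Y_{\alpha_1}(\tilde c)$ forces these two families of $d$-invariants to coincide after a $\mathrm{spin}^c$-preserving relabeling.

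The main obstacle --- and exactly where the square-free hypothesis enters --- is to show that this coincidence cannot occur for an essential $\tilde c$. The relevant $\mathrm{spin}^c$ structures are indexed by $H_1(Y)$ and organized by the linking form of $Y$, and any homeomorphism realizing the cosmetic surgery must respect this linking form. When each summand of $H_1(Y)$ has square-free order, the linking form is rigid: its isometry group is too small to produce the nontrivial $\mathrm{spin}^c$ identification that a genuine cosmetic surgery would require, while the two slopes $\alpha_0,\alpha_1$ remain distinguished by the sign and size of the twist. Playing the forced symmetry of the $d$-invariants against this rigidity yields a contradiction unless $\tilde c$ bounds a disk, that is, unless $c$ was nugatory. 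I expect this linking-form-versus-correction-term bookkeeping to be the technical heart of the argument; the square-free condition is precisely what eliminates the exotic $\mathrm{spin}^c$ matchings that would otherwise be compatible with a cosmetic surgery, and thereby closes the contradiction.
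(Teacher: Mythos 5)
First, a framing remark: the paper you were given does not prove Theorem~\ref{ccc} at all --- it is quoted from \cite{LidmanMoore} --- so your attempt has to be measured against the argument in that paper. Your opening reduction is essentially the correct first move and matches theirs: the crossing disk lifts to an annulus in $Y=\Sigma(K)$, the lift $\widetilde{\gamma}$ of the crossing arc (isotopic to the core of your annulus) is the core of the solid torus covering the crossing ball, and the crossing change is realized upstairs by refilling along $\widetilde{\gamma}$, so a cosmetic crossing change yields an orientation-preserving homeomorphism between $Y$ and a surgery on $\widetilde{\gamma}$. One bookkeeping point you omit and later need: the two fillings are at distance two (the surgery is of half-integer type), which is exactly what drives the homological arithmetic below.

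The genuine gap is in what you call the technical heart. In \cite{LidmanMoore} the square-free hypothesis enters \emph{homologically}, not through any rigidity of $\mathrm{spin}^c$ identifications: comparing $|H_1|$ of the two distance-two fillings, the condition that every summand of $H_1(Y)$ has square-free order forces $[\widetilde{\gamma}]=0\in H_1(Y)$, and then equality of homology orders pins the surgery coefficient to $\pm 1/2$. Your proposed mechanism --- ``the linking form is rigid: its isometry group is too small'' when the summands are square-free --- is false as stated, and the present paper itself furnishes the counterexample: the knots $K_n$ with $7\mid n$ have $H_1(\Sigma(K_n))\cong \Z/7\Z\oplus\Z/7\Z$ (Lemma~\ref{bdc}), which satisfies the square-free-summand hypothesis while its linking form admits many isometries (unit scalings, interchange of factors). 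So the hypothesis cannot do the work you assign to it, and the step where you ``play the $d$-invariants against this rigidity'' has no content to run on.

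Relatedly, a $d$-invariant comparison cannot by itself terminate in ``a contradiction unless $\widetilde{\gamma}$ bounds a disk'': when $\widetilde{\gamma}$ \emph{does} bound a disk the refilling is genuinely cosmetic and every invariant matches, so the borderline case must be detected, not merely obstructed numerically. What \cite{LidmanMoore} actually does after the null-homologous reduction is use the L-space condition through the surgery formula to force $\rk \hfk$ of $\widetilde{\gamma}\subset Y$ to be minimal, and then invoke the genus/unknot-detection property of knot Floer homology (the ``Dehn surgery characterization of the unknot'' flavor of argument) to conclude $\widetilde{\gamma}$ bounds a disk. Finally, the step you wave off as a verification --- that non-nugatory forces $\widetilde{\gamma}$ essential, equivalently that a disk bounded upstairs makes the crossing nugatory downstairs --- is a substantive ingredient: the disk must be made equivariant with respect to the covering involution (an equivariant Dehn lemma) before it can descend to a disk bounded by the crossing circle in the complement of $K$, and the case of a reducible complement of $\widetilde{\gamma}$ needs separate treatment. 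So while your reduction to a cosmetic surgery problem in $Y$ is sound, the proof as proposed would fail at exactly the two places where the hypotheses of the theorem are consumed.
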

Recall that \emph{L-spaces} are the rational homology spheres with the simplest possible Heegaard Floer homology, meaning that $\operatorname{rank} \widehat{HF}(Y) = |H_1(Y;\mathbb{Z})|$. By work of \os~\cite{OS:BDCs}, knots that are reduced Khovanov homology thin have branched double covers that L-spaces. Thus Khovanov homology will be one of the tools we use to prove that the knots of Theorem \ref{main} satisfy the conditions of Theorem \ref{ccc}.

Prior to Theorem \ref{ccc}, the main classes of knots known to satisfy Conjecture \ref{conj:ccc} were fibered knots, two-bridge knots, and Whitehead doubles of prime, non-cabled knots \cite{Kalfagianni, Torisu, BK:KnotsWithout}, and it was shown in \cite{BFKP} that any genus one knot which might admit a cosmetic crossing change must be algebraically slice. The infinite family of knots we construct here are shown in Section \ref{sec:proof} to be non-alternating, non-fibered, hyperbolic, of genus two, and bridge number three. In a different direction, Theorem \ref{ccc} was applied in \cite{LidmanMoore} to settle the status of Conjecture \ref{conj:ccc} for all knots with up to nine crossings, families of pretzel knots of arbitrarily high genus, and certain knots arising as the branched sets of surgeries on strongly invertible L-space knots. In particular, the examples constructed in \cite{LidmanMoore} were of non-constant determinant. The present knots have fixed determinant and branched double covers with non-cyclic first homology. These properties differentiate them from all other knots known to satisfy Conjecture \ref{conj:ccc}, adding further variety to the landscape of knots for which this fundamental conjecture has been settled. 



\section{Symmetric unions}
\label{sec:symmetric}
Let $K$ denote an oriented knot in $S^3$. The mirror of $K$ is denoted $m(K)$. We will abuse notation and let $K$ refer to both the knot and its planar diagram. We will use $J$ to denote an oriented knot as well. Elementary rational tangles will be denoted by $T_{n}$ for $n\in\{\Z,\infty\}$, as indicated in Figure \ref{fig:elementary}.
\begin{figure}
	\labellist
	\small\hair 2pt
	\pinlabel $T_\infty$ at 70 -5
	\pinlabel $T_0$ at 270 -5
	\pinlabel $T_{-1}$ at 470 -5	
	\pinlabel $T_{1}$ at 670 -5
	\pinlabel $T_{2}$ at 870 -5		
	\endlabellist
	\includegraphics[width=7cm]{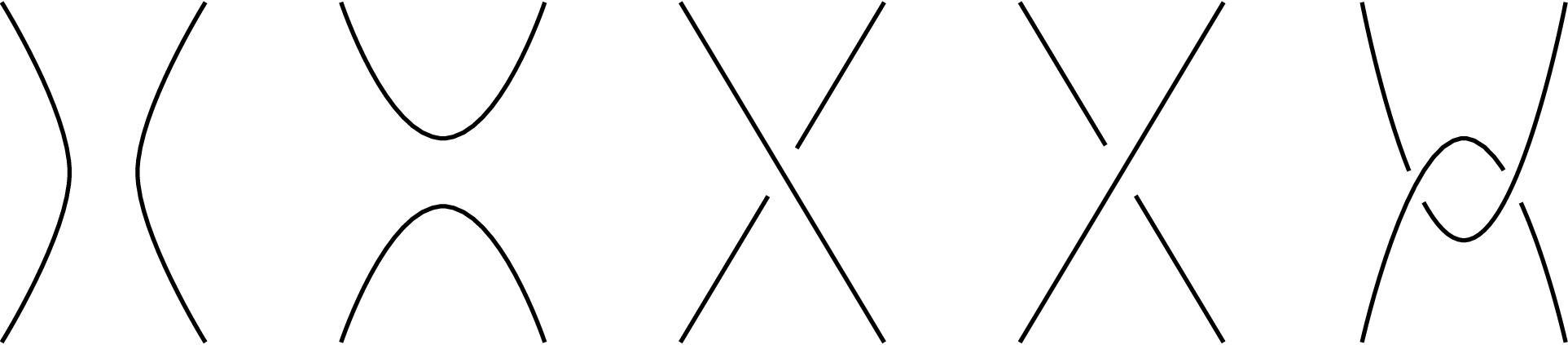}
	\caption{Examples of elementary rational tangles.}
	\label{fig:elementary}
\end{figure}
\begin{definition}
\label{def:symunion}
A \emph{symmetric union of} $J$ is an (unoriented) knot diagram obtained by replacing an elementary $0$-tangle $T_{0}$ with an elementary $n$-tangle $T_{n}$, with $n\neq 0,\infty$, along an axis of mirror symmetry in a diagram of $J\#m(J)$ as in Figure \ref{tanglerep}. A knot which admits a symmetric union diagram is called a \emph{symmetric union}, and we denote a symmetric union of $J$ by $K_n(J)$. The (unoriented) knot $J$ is called the \emph{partial knot} of $K_n(J)$, and $K_0(J)$ is $J\#m(J)$.
\end{definition}
The definition is due to Kinoshita and Teraksa \cite{KinoshitaTerasaka}. Note that when $J$ is oriented and $n$ is even, $K_n(J)$ inherits an orientation from the connected sum of $J$ with its reverse mirror image, but when $n$ is odd, the orientation of $K_n(J)$ is not well-defined. To construct an oriented symmetric union, we will adopt the convention that the north-east strand of $T_{n}\subset K_n(J)$ in Figure \ref{tanglerep} is oriented so that it agrees with the orientation of the north-east strand in $T_0 \subset K_0(J)$.\footnote{ This orientation convention is somewhat artificial; however, our choice of orientation ultimately will not matter because the knot invariants which we study in Sections \ref{sec:symmetric} and \ref{sec:proof} are not sensitive to orientation reversal.} With $K_n(J)$ oriented, the crossings in the tangle $T_{n}$ are positive whenever $n>0$.

\begin{figure}
	\labellist
	\small\hair 2pt
	\pinlabel $J$ at 270 600
	\pinlabel $m(J)$ at 270 100
	\pinlabel $J$ at 830 600	
	\pinlabel $m(J)$ at 830 100
	\pinlabel $T_{0}$ at 400 410
	\pinlabel $T_{n}$ at 970 420		
	\pinlabel $K_0(J)$ at 40 570	
	\pinlabel $K_n(J)$ at 630 570	
	\endlabellist
\includegraphics[width=7cm]{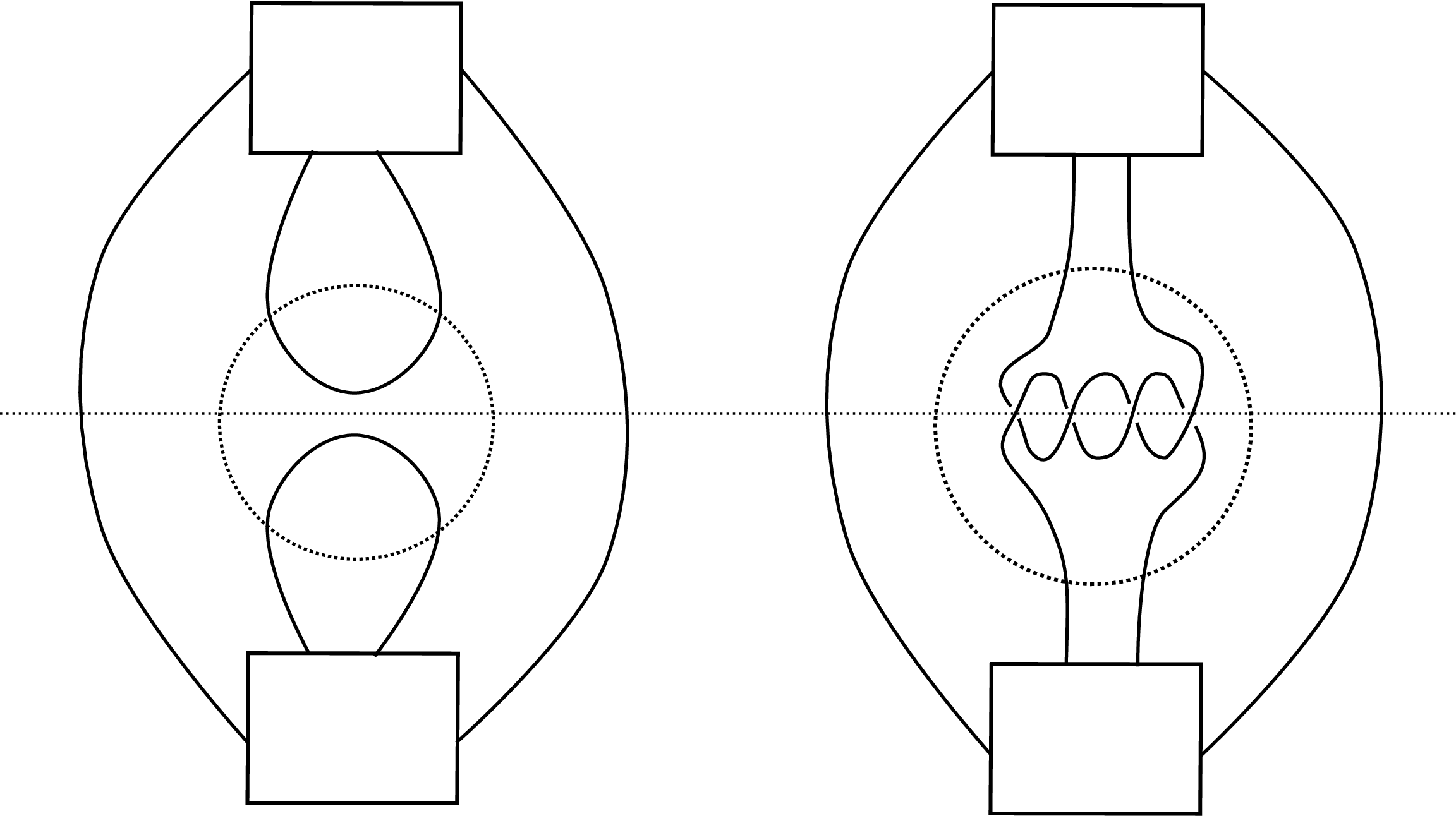}
\caption{A symmetric tangle replacement of a $0$-tangle $T_0$ with an elementary $n$-tangle $T_{n}$. (Here, $n=4$.) The diagrams of $J$ and $m(J)$ in this schematic are assumed to be mirror symmetric with respect to the horizontal axis.}
\label{tanglerep}     
\end{figure}

Elsewhere in the literature, a symmetric union may refer the generalization of this construction in which multiple symmetric tangle replacements are made, but we will call these \emph{generalized symmetric unions}. The reader is warned that the symmetric union construction is not unique; the isotopy type of $K_n(J)$ depends on both the diagram of $J\#m(J)$ and the location of the tangle replacement. For example, two distinct symmetric unions of the unknot are pictured in Figure \ref{fig:symexamples}. Despite this dependence on the diagram, a classical fact about symmetric unions is that when $n$ is even, the Alexander polynomial of $K_n(J)$ depends on neither $n$ nor the choice of diagram. 
\begin{theorem}\cite{KinoshitaTerasaka}
\label{Alexander}
If $K_n(J)$ is any symmetric union of the knot $J$ and $n$ is even, then 
\begin{equation*}
	\Delta_{K_n(J)}(t) = (\Delta_J(t))^2.
\end{equation*}
\end{theorem}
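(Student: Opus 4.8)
The plan is to prove the statement by induction on the even twist parameter $n$, using the Conway polynomial $\nabla$ and its oriented skein relation $\nabla_{L_+}(z) - \nabla_{L_-}(z) = z\,\nabla_{L_0}(z)$; since $\nabla$ determines the Alexander polynomial via $\Delta_L(t) \doteq \nabla_L(t^{1/2} - t^{-1/2})$, it suffices to work with $\nabla$. For the base case $n = 0$, the diagram $K_0(J)$ is by definition the connected sum $J \connectsum m(J)$, so multiplicativity of the Alexander polynomial under connected sum together with $\Delta_{m(J)}(t) \doteq \Delta_J(t)$ gives $\Delta_{K_0(J)}(t) \doteq (\Delta_J(t))^2$.

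For the inductive step I would show that $\nabla_{K_n(J)} = \nabla_{K_{n-2}(J)}$ whenever $n$ is even. The idea is to apply the skein relation at the topmost crossing of the tangle $T_n$ lying on the symmetry axis. Changing this crossing creates a Reidemeister II pair with its neighbor and thereby reduces the tangle to $T_{n-2}$, so the two sign-resolutions $L_+$ and $L_-$ are precisely $K_n(J)$ and $K_{n-2}(J)$. The heart of the argument is the identification of the oriented smoothing $L_0$. Because $n$ is even, $K_n(J)$ carries the orientation inherited from the connected sum, under which the two strands running through $T_n$ are antiparallel; hence the oriented (Seifert) smoothing of the chosen crossing is the cap–cup resolution. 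This resolution closes the strands of $J$ above the axis into a copy of $J$ and the strands of $m(J)$ below the axis into a copy of $m(J)$, while the remaining half-twists untwist by Reidemeister I moves. Since the tangle is the only place where the $J$-part and the $m(J)$-part of the diagram meet, $L_0$ is the split link $J \sqcup m(J)$, whose Conway polynomial vanishes.

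Feeding this into the skein relation gives $\nabla_{K_n(J)} - \nabla_{K_{n-2}(J)} = \pm z\,\nabla_{J \sqcup m(J)} = 0$, which completes the induction and yields $\Delta_{K_n(J)}(t) \doteq \Delta_{K_0(J)}(t) \doteq (\Delta_J(t))^2$ for every even $n$. The main obstacle I anticipate is the orientation bookkeeping behind the identification of $L_0$: one must verify carefully that, for even $n$, the inherited orientation makes the two tangle strands antiparallel, so that the forced oriented smoothing is the cap–cup resolution, and that this resolution genuinely splits the diagram as $J \sqcup m(J)$ rather than reconnecting the two halves. This is exactly the step where the hypothesis that $n$ is even is essential: for odd $n$ the endpoint-connection pattern of $T_n$ changes, the orientation is not well defined, and the smoothing no longer separates the two summands.
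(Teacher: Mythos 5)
Your proof is correct. Note that the paper does not supply its own argument for Theorem \ref{Alexander} --- it is quoted from \cite{KinoshitaTerasaka}, whose original proof proceeds via a presentation of the knot group and Fox calculus rather than skein theory --- so your Conway-polynomial induction is a genuinely different (and arguably cleaner) route. The key step checks out: because the diagram of $J\#m(J)$ is mirror symmetric and the tangle $T_n$ is the only place the diagram meets the axis, the knot crosses the axis exactly twice through the tangle, once in each direction, so the two tangle strands are antiparallel under the inherited orientation (this is where $n$ even is used, since otherwise that orientation is not defined); hence the oriented smoothing is the cap--cup resolution, which untwists to the split diagram $J \sqcup m(J)$ with $\nabla = 0$, and the induction $\nabla_{K_n(J)} = \nabla_{K_{n-2}(J)}$ closes (for $n<0$ one inducts upward, or uses $K_{-n}(J)\simeq m(K_n(J))$). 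It is worth observing that your argument is exactly the decategorification of the skein-triple technique the paper invokes for Theorem \ref{hfk}: there the triple is $K_n(J)$, $K_{n-2}(J)$, and the $T_\infty$-resolution, which is a two-component unlink only under the symmetric fusion number one hypothesis. Your version needs no such hypothesis because the Conway polynomial of \emph{any} split link vanishes, whereas the Floer homology of $J \sqcup m(J)$ does not; this cleanly explains why the polynomial statement holds for all symmetric unions while the categorified statement requires fusion number one.
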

Moreover, $\det(K_n(J))=\det(J)^2$ for any $n$ (cf \cite[Theorem 2.6]{Lamm:SymmetricRibbon}). 

A symmetric union is always ribbon, which is evidenced by the existence of a symmetric ribbon disk in its symmetric diagram, similar to the one that occurs in any symmetric diagram of $J\#m(J)$. The \os~$\tau$-invariant \cite[Corollary 1.3]{OS:FourBall} gives a lower bound on the smooth four-ball genus, $|\tau(K)|\leq g_4(K)$, as does Rasmussen's $s$-invariant \cite[Theorem 1]{Rasmussen:s}. Hence these invariants will vanish for any symmetric union, a feature we will utilize in Section \ref{sec:KH}.

\begin{definition}
If replacing the elementary $n$-tangle $T_{n}$ of a symmetric union diagram $K_n(J)$ with the $\infty$-tangle $T_\infty$ results in a two-component unlink, we say that the diagram $K_n(J)$ has \emph{symmetric fusion number one}. A knot which admits a symmetric union diagram of symmetric fusion number one is also said to have symmetric fusion number one.
\end{definition}

For example, the pretzel knots of the form $(p,q,-p)$, for $p,q\in\Z$ with $p$ odd, have symmetric fusion number one. See Figure \ref{fig:bandnumberone}. Note that if $K_n(J)$ has symmetric fusion number one, then $K_m(J)$ has symmetric fusion number one for any $m\neq 0,\infty$. A knot of symmetric fusion number one is necessarily the band sum of a two-component unlink.

Generalized symmetric unions of the figure eight knot were used by Kanenobu to construct infinite families of knots with different Alexander modules and the same Jones polynomials \cite{Kanenobu:Infinitely}. More recently, Kanenobu's knots have become popular in the study of knot polynomials and knot homology theories (for instance \cite{Watson, Lobb, GW:Turaev}). The proof of Theorem \ref{main} will make use of some of the same techniques as Kanenobu \cite{Kanenobu:Infinitely} and Greene and Watson \cite{GW:Turaev}. 

\begin{figure}
\includegraphics[width=5.5cm]{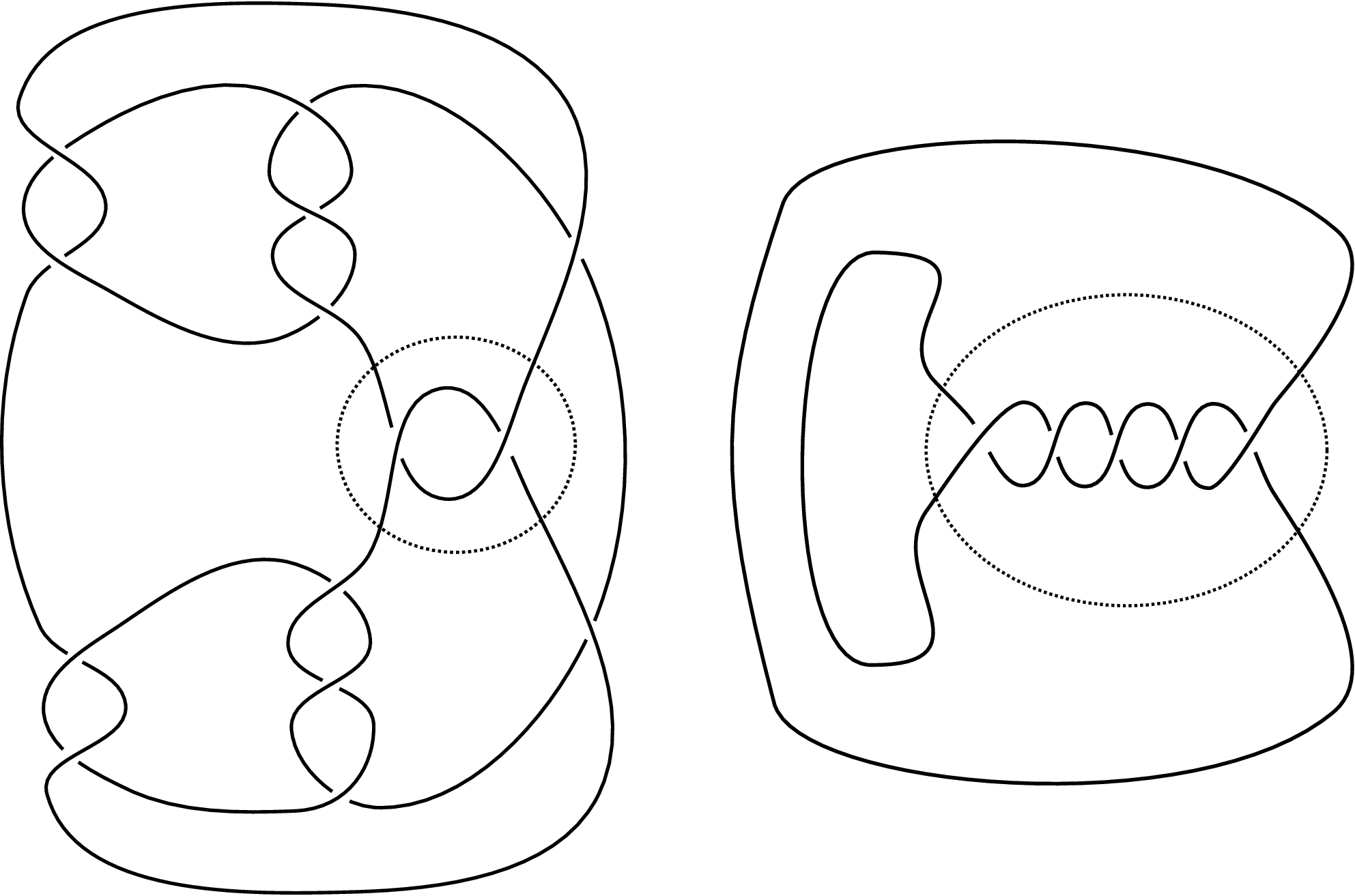}
\caption{The knot on the left is the Kinoshita-Terasaka knot $11 n 42$, and the knot on the right is an unknot. Both have partial knot the unknot.}
\label{fig:symexamples}
\end{figure}
\begin{figure}[t]
\includegraphics[width=3cm]{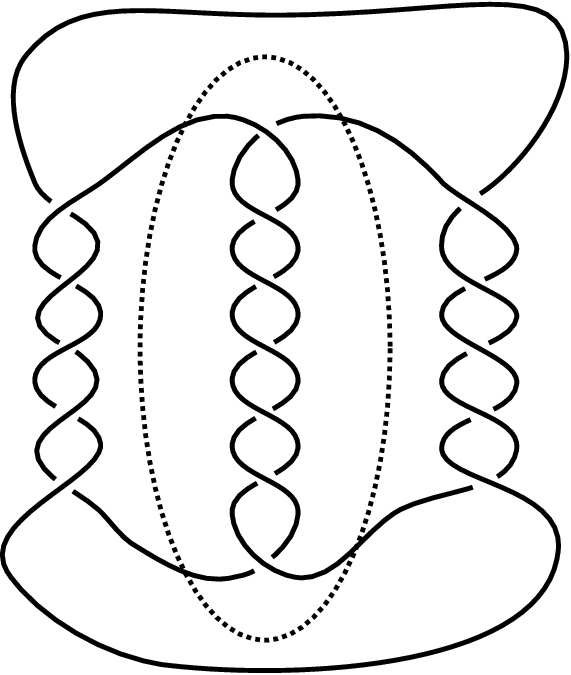}
\caption{Pretzel knots of the form $(p,q,-p)$, for $p,q\in\Z$ with $p$ odd, have symmetric fusion number one. The axis of mirror symmetry is vertical in this example.}
\label{fig:bandnumberone}
\end{figure}

\subsection{Knot Floer homology}
\label{sec:hfk}

Let $\hfk_m(K,s)$ refer to the knot Floer homology of $K\subset S^3$ with $\Z/2\Z$ coefficients, due to \os~\cite{OS:KnotInvariants} and Rasmussen \cite{Rasmussen:Thesis}. This knot invariant is a bigraded vector space with Maslov grading $m$ and Alexander grading $s$. Because knot Floer homology categorifies the symmetrized Alexander polynomial, one may wonder if a statement generalizing Theorem \ref{Alexander} holds for the knot Floer groups, and in particular whether a K\"unneth formula like the one satisfied by connected sums,
\begin{equation*}
	\hfk(K_1\# K_2) \isom \hfk(K_1)\otimes\hfk(K_2),
\end{equation*}
holds. Unfortunately no such property can hold for symmetric unions in general. Knot Floer homology detects the unknot \cite{OS:KnotInvariants}, therefore any nontrivial symmetric union of an unknot (e.g. the one in Figure \ref{fig:symexamples}) will have $\hfk(K)$ nontrivial, contradicting any general analogy. However, when $K_n(J)$ has symmetric fusion number one, Kinoshita and Terasaka's characterization of the Alexander polynomial of a symmetric union does indeed generalize.
\begin{theorem}
\label{hfk}
Let $K_n(J)$ be a symmetric union of a knot $J$ such that $K_n(J)$ has symmetric fusion number one. When $n$ is even, there is a graded isomorphism
\begin{equation*}
	\hfk(K_n(J)) \isom \hfk(J)\otimes \hfk(m(J)),
\end{equation*}
and when $n$ is odd, we have that
\begin{equation*}
	\hfk(K_n(J)) \isom \hfk(K_1(J)).
\end{equation*}
\end{theorem}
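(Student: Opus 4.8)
The plan is to prove that $\hfk(K_n(J))$ depends only on the parity of $n$, which yields both assertions at once: the even case reduces to $n=0$, where $K_0(J)=J\#m(J)$ and the connected-sum Künneth formula gives $\hfk(K_0(J))\isom\hfk(J)\otimes\hfk(m(J))$, while the odd case reduces to $n=1$. Thus it suffices to establish a graded isomorphism $\hfk(K_n(J))\isom\hfk(K_{n-2}(J))$ for every $n$ and then induct in steps of two down to the base cases, handling $n<0$ by the symmetric (mirror) argument. Since $\hfk$ is insensitive to orientation reversal (see the footnote in Section \ref{sec:symmetric}), the fact that the orientation of $K_n(J)$ is only well-defined up to convention when $n$ is odd causes no difficulty in the odd case.

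To compare $K_n(J)$ with $K_{n-2}(J)$ I would apply the oriented skein exact sequence for knot Floer homology at a single crossing of the twist region $T_n$. Changing that crossing (for $n>0$ a positive crossing, so $K_n(J)$ plays the role of $L_+$) subtracts two half-twists, turning $K_n(J)$ into $K_{n-2}(J)=L_-$. The orientation conventions of Section \ref{sec:symmetric} make the two strands of the tangle anti-parallel through the twist region, so the oriented (Seifert) resolution of the chosen crossing is the turnback smoothing; performing it caps off the twist region, and removing the remaining curls by Reidemeister I moves replaces $T_n$ by $T_\infty$. By the symmetric fusion number one hypothesis this $\infty$-resolution is exactly the two-component unlink $U$, so $L_0=U$. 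This identification is consistent with the Conway skein relation: Theorem \ref{Alexander} gives $\Delta_{K_n(J)}=\Delta_{K_{n-2}(J)}=(\Delta_J)^2$, hence $\Delta_{L_0}=0$, matching $\Delta_U=0$ and confirming that the oriented resolution is the unlink rather than the connected sum $K_0(J)$.

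With the three terms identified, the oriented skein exact sequence is a long exact sequence relating $\hfk(K_n(J))$, $\hfk(K_{n-2}(J))$, and $\hfk(U)$, up to overall grading shifts. The crux is to upgrade this exactness to the actual graded isomorphism $\hfk(K_n(J))\isom\hfk(K_{n-2}(J))$. Here the hypothesis does its real work: the resolution $U$ is a two-component unlink, so $\hfk(U)$ has total rank two and is supported in a single diagonal grading (the difference of its Alexander and Maslov gradings), in half-integer Alexander degrees. I would analyze the sequence one Alexander grading at a time, combining the equality of graded Euler characteristics furnished by Theorem \ref{Alexander} with the tightly concentrated bigrading of $\hfk(U)$ to force the connecting homomorphisms into and out of the unlink term to be isomorphisms onto it. This cancels the rank-two contribution and leaves the desired graded isomorphism.

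The main obstacle is precisely this last step: controlling the two connecting maps well enough to rule out any residual contribution from $\hfk(U)$, so that exactness becomes an isomorphism. Making the Maslov and Alexander grading shifts in the skein sequence explicit—in particular the half-integer Alexander shift introduced by the change in the number of components—is the delicate bookkeeping that must be carried out carefully. Once the grading-preserving isomorphism $\hfk(K_n(J))\isom\hfk(K_{n-2}(J))$ is in hand, the induction on $|n|$ completes both the even case (terminating at the Künneth computation for $K_0(J)$) and the odd case (terminating at $K_1(J)$).
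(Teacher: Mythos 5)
Your skeleton matches the paper's: you form the oriented skein triple in which $K_n(J)$ and $K_{n-2}(J)$ differ by a crossing change in the twist region and the oriented resolution is the two-component unlink (exactly where symmetric fusion number one enters), you invoke the long exact sequence of \cite[Theorem 1.1]{OS:Squence}, and you reduce by induction to $K_0(J)=J\#m(J)$ and the K\"unneth formula in the even case and to $K_1(J)$ in the odd case. The gap is at the step you yourself flag as the crux: ruling out a residual contribution from $\hfk(U)$. Your proposed tool --- matching graded Euler characteristics via Theorem \ref{Alexander} against the concentrated support of $\hfk(U)$ --- cannot do this. The two generators of $\hfk(U)$ sit in the same Alexander grading and adjacent Maslov gradings, so the graded Euler characteristic of the unlink term vanishes; consequently the identity $\chi(\hfk(K_n))=\chi(\hfk(K_{n-2}))$ is automatic from the exact triangle (it is just the Conway skein relation with $\Delta_U=0$) and carries no information beyond what exactness already provides. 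In particular it cannot exclude the alternative outcome $\hfk(K_n(J))\isom\hfk(K_{n-2}(J))\oplus\F^2$ with the two extra generators in adjacent Maslov gradings, which has the same Euler characteristic and is equally consistent with the long exact sequence. Note also that Theorem \ref{Alexander} is stated only for $n$ even, so it is unavailable for the odd-$n$ steps of your induction; and your stated goal of making the connecting homomorphisms both ``into and out of'' the unlink term isomorphisms onto it is incompatible with exactness unless $\hfk(U)=0$.

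The missing ingredient is the one the paper names explicitly: symmetric unions are ribbon, hence slice, so $\tau(K_n(J))=0$ for every $n$, and it is this vanishing --- fed into the band-sum/skein machinery of \cite[Theorem 1]{HeddenWatson} (alternatively \cite[Theorem 3.3]{MooreStarkston}) --- that pins down the rank and bigrading of the connecting homomorphism involving $\hfk(U)$ and forces the graded isomorphism $\hfk(K_n(J))\isom\hfk(K_{n-2}(J))$. Without an input of this kind, one that sees more than the Euler characteristic, your induction step does not close.
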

This follows as a special case of \cite[Theorem 1]{HeddenWatson} (alternatively \cite[Theorem 3.3]{MooreStarkston}) whose proof we will not repeat here. The key observation is that after perhaps mirroring, the knots $K_n(J)$ and $K_{n-2}(J)$ form an oriented skein triple with the two-component unlink and their knot Floer groups fit into a long exact sequence \cite[Theorem 1.1]{OS:Squence}. Using that symmetric unions are ribbon, hence slice, the concordance invariant $\tau(K_n(J))$ vanishes for all $n$. This fact, taken together with the skein triple and the observation that $K_0(J)$ is $J\#m(J)$, gives the statement of the theorem. 

Because knot Floer homology detects genus \cite{OS:KnotInvariants} and fiberedness \cite{Ni}, and satisfies a K\"unneth formula under connected sum, the following corollaries are immediate.
\begin{corollary}
\label{fibergenus}
Let $K_n(J)$ be a symmetric union of a knot $J$ such that $K_n(J)$ has symmetric fusion number one. If $n$ is even, then $K_n(J)$ is fibered if and only if $J$ is fibered and $g(K_n(J))=2g(J)$. If $n$ is odd, then  $g(K_n(J))=g(K_1(J))$.
\end{corollary}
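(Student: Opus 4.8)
The plan is to read off both the fiberedness and the genus of $K_n(J)$ directly from the two graded isomorphisms of Theorem \ref{hfk}, using the two detection principles for knot Floer homology. Recall that knot Floer homology detects the Seifert genus \cite{OS:KnotInvariants}, in the sense that $g(K)=\max\{s : \hfk_*(K,s)\neq 0\}$, and detects fiberedness \cite{Ni}, in the sense that $K$ is fibered if and only if the top group $\hfk_*(K,g(K))$ has rank one over $\F$. Two elementary facts about mirroring are also needed: $g(m(J))=g(J)$, and $m(J)$ is fibered if and only if $J$ is.

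First I would dispatch the even case. Here Theorem \ref{hfk} gives a graded isomorphism $\hfk(K_n(J))\isom\hfk(J)\otimes\hfk(m(J))$. Since Alexander gradings add under the tensor product, the maximal Alexander grading in which the right-hand side is nonzero is $g(J)+g(m(J))=2g(J)$, so genus detection yields $g(K_n(J))=2g(J)$. Moreover the top group is $\hfk_*(J,g(J))\otimes\hfk_*(m(J),g(m(J)))$, which, over a field, has rank one precisely when each of its two nonzero tensor factors does; by the fiberedness detection principle this occurs exactly when both $J$ and $m(J)$ are fibered, i.e. exactly when $J$ is fibered. This is the expected connected-sum behavior, consistent with $K_0(J)=J\#m(J)$.

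For the odd case, Theorem \ref{hfk} supplies a graded isomorphism $\hfk(K_n(J))\isom\hfk(K_1(J))$. Since the two groups agree as bigraded vector spaces, they share the same top nonzero Alexander grading and the same rank in that grading; applying the two detection principles gives at once $g(K_n(J))=g(K_1(J))$ and that $K_n(J)$ is fibered if and only if $K_1(J)$ is. This immediately establishes the genus formula for odd $n$.

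The one step requiring genuine care, and the main obstacle, is reconciling the odd case with the statement ``fibered if and only if $J$ is fibered.'' The bigraded isomorphism $\hfk(K_n(J))\isom\hfk(K_1(J))$ reduces every odd $n$ to the single knot $K_1(J)$, but Theorem \ref{hfk} does not express $\hfk(K_1(J))$ in terms of $\hfk(J)$, so the equivalence with the fiberedness of $J$ is not formal. To close this gap I would return to the oriented skein exact sequence relating $K_1(J)$, $K_{-1}(J)$, and the two-component unlink that underlies the proof of Theorem \ref{hfk}. Because $\hfk$ of the unlink is supported entirely in Alexander grading zero, in every Alexander grading of absolute value at least two the connecting maps vanish and the sequence forces $\hfk(K_1(J),s)\isom\hfk(K_{-1}(J),s)$; combined with the vanishing of $\tau(K_1(J))$, this pins down the top Alexander grading of $K_1(J)$ together with its rank. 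Verifying that this top group has rank one exactly when $J$ is fibered, and in particular handling the low-genus situations in which the grading-zero unlink contribution can reach the top Alexander grading, is the delicate part of the argument.
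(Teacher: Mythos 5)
Your handling of the even case and of both genus formulas is correct and coincides with the paper's (deliberately one-line) proof: the corollary is deduced there exactly as you do it, from genus detection, Ni's fiberedness detection, and the K\"unneth behavior of the isomorphism $\hfk(K_n(J))\isom\hfk(J)\otimes\hfk(m(J))$; likewise the odd-case genus formula is immediate from $\hfk(K_n(J))\isom\hfk(K_1(J))$.

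The step you isolate as ``the delicate part''---upgrading, for odd $n$, the conclusion ``$K_n(J)$ is fibered iff $K_1(J)$ is fibered'' to ``iff $J$ is fibered''---is a genuine gap, and you should not attempt to close it, because that equivalence is false in general. Take $J$ the trefoil and the symmetric union diagram of Figure \ref{fig:bandnumberone}, so that $K_1(T_{2,3})=P(3,1,-3)$. This is a genus-one knot with Alexander polynomial $-2t+5-2t^{-1}$ and determinant $9$; since the only fibered genus-one knots are the trefoil and the figure eight (determinants $3$ and $5$), it is not fibered, although the trefoil is. The paper's own closing remark on the pretzel knots $P(p,q,-p)$---``when $q$ is odd, $P$ is genus one''---implicitly concedes this. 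Your proposed repair via the skein exact sequence only relates $\hfk(K_1(J))$ to $\hfk(K_{-1}(J))\isom\hfk(m(K_1(J)))$ and to the unlink; it cannot recover the rank of the top Alexander grading of $K_1(J)$ from data about $J$ alone, consistent with the failure of the statement. The correct conclusion available from Theorem \ref{hfk} is the one your bigraded-isomorphism argument actually proves: for odd $n$, $K_n(J)$ is fibered if and only if $K_1(J)$ is. The biconditional with the fiberedness of $J$ should be asserted only for $n$ even. This does not damage the paper's application: the family $\mathcal{K}$ uses only $n\equiv 0\pmod{14}$, and for odd $n$ the non-fiberedness of $K_n(5_2)$ follows directly from \eqref{odd}, whose top Alexander grading has rank two.
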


\begin{corollary}
\label{notunknot}
Let $K_n(J)$ be a symmetric union of symmetric fusion number one with $n$ is even. Then $K_n(J)$ is nontrivial if and only if the partial knot $J$ is nontrivial.
\end{corollary}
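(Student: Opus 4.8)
The plan is to read off the corollary directly from the genus computation in Corollary \ref{fibergenus}, together with the classical fact that the unknot is the unique knot of Seifert genus zero. First I would recall that a knot $K\subset S^3$ is trivial if and only if $g(K)=0$, since a knot bounding an embedded disk in $S^3$ is necessarily unknotted; this requires nothing beyond the definition of Seifert genus and reduces the assertion to a comparison of genera. Since $K_n(J)$ has symmetric fusion number one and $n$ is even, Corollary \ref{fibergenus} yields $g(K_n(J))=2g(J)$. Hence $g(K_n(J))=0$ precisely when $g(J)=0$, and the claimed equivalence follows at once: $K_n(J)$ is nontrivial if and only if $g(J)>0$, i.e.\ if and only if $J$ is nontrivial.

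Alternatively, I would argue straight from knot Floer homology. By Theorem \ref{hfk}, when $n$ is even we have $\hfk(K_n(J))\isom\hfk(J)\otimes\hfk(m(J))$, so that $\operatorname{rank}\hfk(K_n(J))=\left(\operatorname{rank}\hfk(J)\right)^2$, where I use that passing to the mirror reverses the bigradings while preserving the total rank, so $\operatorname{rank}\hfk(m(J))=\operatorname{rank}\hfk(J)$. Because knot Floer homology detects the unknot \cite{OS:KnotInvariants}—a knot is trivial exactly when the total rank of its knot Floer homology equals one—the rank of $\hfk(K_n(J))$ equals one if and only if the rank of $\hfk(J)$ equals one, which is again the desired statement.

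I expect essentially no obstacle here, as both routes are formal consequences of results already in hand: the first invokes only the even-$n$ genus formula and the elementary characterization of the unknot by genus, while the second invokes the K\"unneth-type isomorphism of Theorem \ref{hfk} and unknot detection. The single point meriting a moment's care is the bookkeeping that mirroring preserves the total rank of knot Floer homology, so that the two tensor factors contribute equally; this is standard, and I would include it as a one-line remark rather than a separate argument.
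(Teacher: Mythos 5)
Your proposal is correct and matches the paper's (implicit) argument: the paper declares this corollary immediate from Theorem \ref{hfk} together with the fact that knot Floer homology detects genus, which is exactly your second route, and your first route via $g(K_n(J))=2g(J)$ from Corollary \ref{fibergenus} is just the same argument with the genus detection step made explicit. No gaps.
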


\subsection{Main examples}
\label{sec:mainexamples}
We now define the main examples of interest in this note. Denote by $\mathcal{K}$ the subset of symmetric unions 
\begin{equation}
\label{family}
\{K_n \mid n\equiv0\pmod{14}, \; n\neq0\} \subset \{K_n:=K_n(5_2) \mid n\in \Z\}
\end{equation}
where the symmetric unions $K_n:=K_n(5_2)$ are constructed from the knot $5_2$ as shown in Figure \ref{sym52}. The knot $K_n(5_2)$ has symmetric fusion number one for all $n\neq 0, \infty$. 
\begin{figure}
\includegraphics[width=4cm]{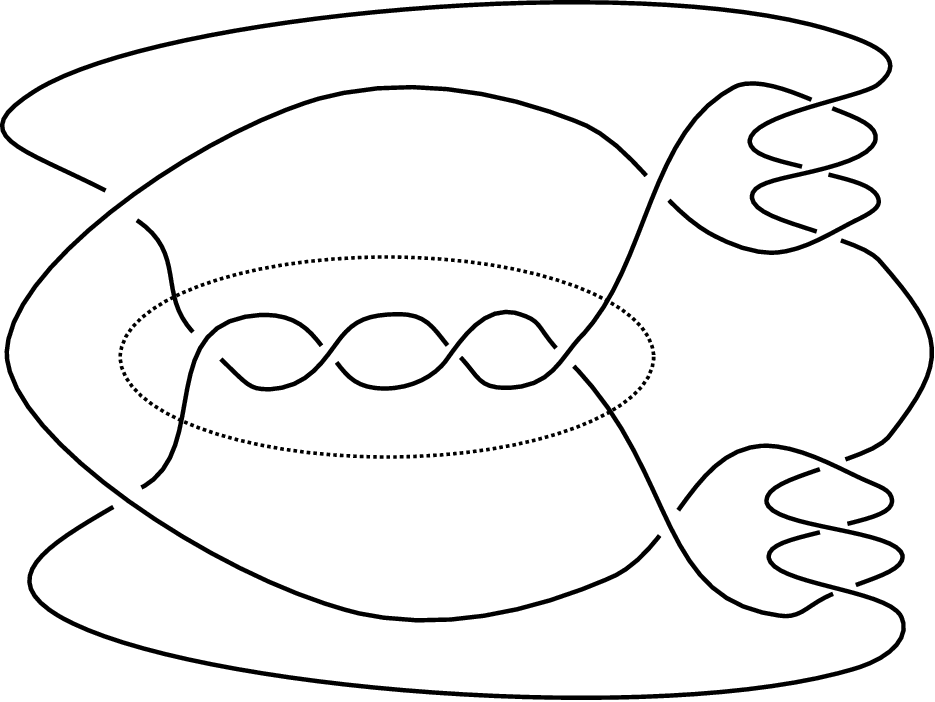}
\caption{The symmetric unions $K_n(5_2)$ of the knot $5_2$. For the knot pictured here, $n=4$.}
\label{sym52}
\end{figure}
For the remainder of this note we will assume that $K_n$ denotes the specific symmetric union $K_n(5_2)$ for $n\in \Z$.


\section{Proof of main theorem}
\label{sec:proof}
Before addressing the main theorem, we need to prove a lemma about the Khovanov homology of $K_n$. This will allow us to deduce that the branched double cover of $K_n$ is an L-space for all $n$.

\subsection{Khovanov homology}
\label{sec:KH}

Let $\kh^{q,u}(L)$ refer to the Khovanov homology of a link $L\in S^3$ with quantum grading $q$ and (co)homological grading $u$, and coefficients in $\Q$. The Khovanov homology groups are a link invariant which categorify a normalized Jones polynomial \cite{Khovanov}. Our grading and notational conventions follow Rasmussen \cite{Rasmussen:Knot}. For example, the Khovanov homology of the knot $5_2$ with all positive crossings is described by the Poincar\'e polynomial
\begin{equation}
\label{52thin}
	P_{\kh(5_2)}(q,u) = q + q^3 + q^3u +q^5u^2 +q^7u^2 + q^9u^3 + q^9u^4 +q^{13}u^5.
\end{equation}

The Khovanov thin knots are those with homology supported in a two diagonals $\delta=q-2u$ of the gradings.\footnote{When working with $\Z$--coefficients, thin knots must also have homology that is free over $\Z$.} Khovanov homology satisfies an unoriented skein exact sequence (cf \cite[Lemma 4.2]{Rasmussen:Knot}). With our conventions this is
\begin{equation}
\label{khskein}
\stackrel{\cdot u}{\longrightarrow} q^{2+3\varepsilon} u^{1+\varepsilon} \kh(\hsmoothing) \longrightarrow \kh(\overcrossing)
\longrightarrow q \kh (\smoothing) \stackrel{\cdot u}{\longrightarrow}
\end{equation}
where $\varepsilon$ is the difference between the number of negative crossings in the unoriented
resolution $(\hsmoothing)$ and the original diagram $(\overcrossing)$. As in \cite{Rasmussen:Knot}, the notation $q \kh (\smoothing)$ means the complex $\kh (\smoothing)$ is shifted in such a way as to multiply its Poincar\'e polynomial by $q$. The arrow marked with $\cdot u $ is the boundary map and it raises the homological grading by $1$. Though computations similar to Lemma \ref{khthin} can be found in \cite{MooreStarkston, Starkston}, for concreteness we provide a proof. 
\begin{lemma}
\label{khthin}
The knot $K_n$ is Khovanov homology thin with $\Q$--coefficients for all $n$. Moreover, $\kh(K_n)$ for $n\geq0$ is given by the closed formula
\begin{small}
\begin{equation}
	\begin{array}{ccccc}
\kh(K_n)&= & \mathbf{1}^{0}_{-1} + & \mathbf{1}^{n-5}_{2(n-5)-1} \cdot (\mathbf{1}^0_0 + \mathbf{1}^1_2 + \mathbf{3}^2_4 +\mathbf{3}^3_6 + \mathbf{4}^4_8 + \mathbf{4}^5_{10} + \mathbf{3}^6_{12} + \mathbf{3}^7_{14} + \mathbf{1}^8_{16} + \mathbf{1}^9_{18} ) \\
 &+& \mathbf{1}^0_1 + & \mathbf{1}^{n-4}_{2(n-4)+1} \cdot (\mathbf{1}^0_0 + \mathbf{1}^1_2 + \mathbf{3}^2_4 +\mathbf{3}^3_6 + \mathbf{4}^4_8 + \mathbf{4}^5_{10} + \mathbf{3}^6_{12} + \mathbf{3}^7_{14} + \mathbf{1}^8_{16} + \mathbf{1}^9_{18})
	\end{array}
\label{formula}
\end{equation}
\end{small}
where for brevity, $\mathbf{d}^u_q$ denotes $\Q^\mathbf{d}$ in bigrading $(q,u)$.
\end{lemma}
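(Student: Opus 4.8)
The strategy is to induct on $n\ge 0$ using the unoriented skein exact sequence \ref{khskein} at a single crossing $c$ of the twist region $T_n$, and then to read off \ref{formula} from the Jones polynomial once thinness is known. First I would identify the skein triple. Resolving $c$ one way removes a half-twist and produces the symmetric union $K_{n-1}$; resolving it the other way caps off $c$, after which the remaining half-twists become Reidemeister~I kinks, and removing them leaves the $T_\infty$ replacement, which is the two-component unlink $U$ by the symmetric fusion number one hypothesis. Thus \ref{khskein} becomes a long exact sequence relating $\kh(K_n)$, $\kh(K_{n-1})$, and $\kh(U)=\mathbf{1}^{0}_{-2}+\mathbf{2}^{0}_{0}+\mathbf{1}^{0}_{2}$, with grading shifts governed by $\varepsilon$ (determined by the sign of $c$, which is positive for $n>0$). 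Knots with $n<0$ require no separate formula, and their thinness follows from the reflection $\kh^{q,u}(m(K_n))\isom\kh^{-q,-u}(K_n)$ together with the fact that $m(K_n)$ is again a symmetric union of symmetric fusion number one, to which the same induction applies; so it suffices to treat $n\ge 0$.

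For the base case, $K_0=5_2\#m(5_2)$ is alternating and hence thin, with $\kh(K_0)$ determined by its Jones polynomial $V_{5_2}\cdot V_{m(5_2)}$ and the signature $\sigma(K_0)=0$, which vanishes because $K_0$ is slice. For the inductive step I assume $K_{n-1}$ is thin. The essential point is that the connecting homomorphism of the long exact sequence cannot be zero: if it were, the sequence would split and $\kh(K_n)$ would be a direct sum of the two shifted terms, which lie on several distinct diagonals $\delta=q-2u$ and would violate thinness. I would control the connecting map using the graded Euler characteristic. The determinant is constant along the family, $\det(K_n)=\det(5_2)^2=49$, and together with the Jones polynomial of $K_n$ this forces the total rank of $\kh(K_n)$ to be $50$; combined with the subadditive bound $\dim_\Q\kh(K_n)\le\dim_\Q\kh(K_{n-1})+\dim_\Q\kh(U)=54$ coming from the exact sequence, this pins the rank of the connecting map to be exactly $2$ at each stage. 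Realizing this rank collapses the homology onto the two diagonals $\delta=\pm1$, so $K_n$ is thin.

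Once thinness is established for all $n$, the closed formula \ref{formula} follows formally: a thin knot's Khovanov homology is determined by its Jones polynomial together with its signature, and here $\sigma(K_n)=0$ fixes the diagonals as $\delta=\pm1$. I would then compute $V_{K_n}$ in closed form by running the Kauffman bracket through the same twist-region recursion and solving the resulting linear recurrence in $n$; sorting its coefficients onto the two diagonals reproduces the stable summand $\mathbf{1}^{0}_{-1}+\mathbf{1}^{0}_{1}$ together with the two rank-$24$ blocks at homological heights $n-5$ and $n-4$. The main obstacle is exactly the inductive preservation of thinness, and specifically showing that the connecting homomorphism is nonzero of the correct rank rather than vanishing; ruling out the split case requires combining the rank constraint from the constant determinant and the Euler characteristic with a careful accounting of the grading shift $\varepsilon$ at each resolution, and verifying that these shifts line the three terms up so that the surviving classes can only sit on $\delta=\pm1$ is the delicate part of the argument.
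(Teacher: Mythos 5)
Your setup matches the paper's: the same skein triple $\{K_n, K_{n-1}, \text{unlink}\}$ at a crossing of $T_n$, induction on $n$, and reduction of $n<0$ to $n>0$ by mirroring. You have also correctly isolated the hard step --- ruling out the possibility that the long exact sequence contributes extra generators off the diagonals $\delta=\pm1$. But your proposed resolution of that step does not work. You claim that the determinant and the Jones polynomial ``force the total rank of $\kh(K_n)$ to be $50$,'' and then use the subadditive bound $\dim\kh(K_n)\le 54$ to pin the rank of the connecting map. The Jones polynomial only records the graded Euler characteristic, so it gives a \emph{lower} bound of $50$ on the total rank; ranks $52$ and $54$ remain consistent with both the Euler characteristic and the exact sequence, and these are exactly the ``second outcomes'' that must be excluded (an extra cancelling pair at $(1,0)$--$(1,1)$ and/or at $(3,0)$--$(3,1)$ in the paper's gradings). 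Asserting that the total rank equals the determinant is equivalent to assuming thinness, which is what you are trying to prove. The paper closes this gap with a different tool: since symmetric unions are ribbon, $s(K_n)=0$, so the Lee spectral sequence converges to two generators in quantum gradings $\pm1$ and homological grading $0$; a case analysis of where the Lee differentials $d^r$ (which raise $u$ by $1$ and $q$ by $2r$) could send or hit the putative extra generators shows no cancelling partners exist, forcing the thin outcome. Some such additional input beyond the Euler characteristic is genuinely needed.

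A secondary issue is the base case. You start the induction only at $K_0=5_2\#m(5_2)$, but the grading bookkeeping in the inductive step (locating the two rank-$24$ blocks at homological heights $n-5$ and $n-4$ well away from the ambiguous gradings $(1,0)$, $(1,1)$, $(3,0)$, $(3,1)$) only works once $n$ is large enough that the blocks and the ambiguous gradings are disjoint; the paper verifies $0\le n\le 7$ by direct computation and runs the induction for $n>7$. Your closing step --- deriving the closed formula from thinness plus the Jones polynomial and $\sigma(K_n)=0$ via a Kauffman bracket recursion --- is a legitimate alternative to the paper's method of reading the formula off the iterated isomorphisms, but it is downstream of the thinness claim whose proof is the actual content of the lemma.
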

\begin{proof}
Without loss of generality, we assume $n\geq0$; a similar proof holds in the case that $n\leq0$ with minor changes in bigradings. Alternatively, the result for $n\leq0$ follows from the isotopy $K_{-n}(J)\simeq m(K_n(J))$ obtained by rotating about the axis of symmetry and the identity $\kh^{q,u}(m(K))\isom \kh^{-q,-u}(K)$ for all $q,u$ and any knot $K$.

We proceed by induction on $n$. 
The cases $\kh(K_n)$ for $0\leq n\leq 7$ have been verified computationally using the \emph{KnotTheory`} package for Mathematica \cite{katlas}. Assume that $n>7$. For the inductive hypothesis, $\kh(K_{n-1})$ is thin and described by \eqref{formula}.

Any crossing in the tangle $T_n$ gives rise to an unoriented triple $\{(\overcrossing), (\hsmoothing), (\smoothing)\}$, where $K_n = (\overcrossing)$ and $K_{n-1}=(\hsmoothing)$. Since $K_n$ has symmetric fusion number one for all $n\neq 0,\infty$, the resolution $(\smoothing)$ corresponds with the two-component unlink.  Because $n\geq0$, the crossings in $T_{n}$ are positive, and so the number of negative crossings in the diagram $K_n$ for any $n$ is equal to the total number of crossings in the diagram of the partial knot. Therefore the difference $\varepsilon$ in negative crossings between the resolutions $(\hsmoothing)$ and $(\overcrossing)$ is zero and the skein triple becomes
\begin{equation*}
\stackrel{\cdot u}{\longrightarrow} q^2 u \kh(\hsmoothing) \longrightarrow \kh(\overcrossing)
\longrightarrow q \kh (\smoothing) \stackrel{\cdot u}{\longrightarrow}.
\end{equation*}

The two-component unlink has Khovanov homology $\Q_{(-2)}\oplus \Q^2_{(0)} \oplus \Q_{(2)}$ supported in homological grading zero. 
Using this and the inductive hypothesis, whenever $u\neq0,1$ or $q\neq 1,3$ the sequence splits as
\begin{equation}
\label{iso}
0{\longrightarrow} q^2 u \kh(\hsmoothing) \longrightarrow \kh(\overcrossing)\longrightarrow 0,
\end{equation}
implying the isomorphism $\kh^{q,u}(\overcrossing)\isom  \kh^{q-2,u-1}(\hsmoothing)$ for all $u\neq0,1$ or $q\neq 1,3$. For $(q,u) = (1,0)$ and $(1,1)$, the sequence splits as
\begin{equation}
\label{one}
0\longrightarrow  \kh(\overcrossing)
\longrightarrow q\Q^2 \stackrel{\cdot u}{\longrightarrow} q^2 u \Q \longrightarrow \kh(\overcrossing) \rightarrow 0,
\end{equation}
and for $(q,u) = (3,0)$ and $(3,1)$, the sequence splits as
\begin{equation}
\label{two}
0\longrightarrow \kh(\overcrossing) \longrightarrow q\Q \stackrel{\cdot u}{\longrightarrow} q^2 u \Q \longrightarrow \kh(\overcrossing) \rightarrow 0.
\end{equation}
Exactness yields two solutions for each of \eqref{one} and \eqref{two}, 
\begin{equation}
	\begin{array}{ccc}
  \kh^{1,0}(\overcrossing)\oplus \kh^{1,1} &=& \Q\oplus 0 \; \text{ or } \; \Q^2 \oplus \Q \\
  \kh^{3,0}(\overcrossing)\oplus \kh^{3,1} &=& 0 \oplus 0 \; \text{ or } \; \Q\oplus \Q.
\end{array}
\label{solns}
\end{equation}
We aim to show that the first choice in each line of \eqref{solns} is the correct one, so let us assume for the contrary that the second outcome of \eqref{one} holds. 

Because symmetric unions are ribbon, and therefore slice, the concordance invariant $s(K_n(J))$ vanishes for all $n$. In particular, the Lee spectral sequence \cite{Lee} must converges to two copies of $\Q$ in quantum gradings that average to zero, hence these surviving elements live in $q=\pm1$. Suppose the two survivors are in gradings $(-1,0)$ and $(1,1)$. See Table \ref{E1}. With our current conventions, the induced differential on the $r$--th page of the Lee spectral sequence increases the homological grading by $1$ and the quantum grading by $2r$. By assumption there is a $\Q^2$ summand of the $E^1$ page in bigrading $(1,0)$, and it must cancel via $d^r$, for some $r\geq1$, with a term of rank two. However, by \eqref{iso} and \eqref{two} there is at most one copy of $\Q$ in the bigradings $(q,2)$ for $q\geq3$ and $\kh(\overcrossing)$ vanishes in $u=-1$, so no such term exists. Hence it must be the case that the surviving generators live in bigradings $(\pm1,0)$. 

Again by assumption to the contrary, there is a copy of $\Q$ in $(1,1)$ which must now die in the spectral sequence. Since it cannot cancel with the surviving generator in bigrading $(-1,0)$, it must cancel with a generator in $(q,2)$ for $q\geq 3$. Yet \eqref{iso} implies that $\kh(\overcrossing)$ vanishes in $u=2$, so no such generator exists. It must be the case that $\kh^{1,0}(\overcrossing) \isom \Q$ and $ \kh^{1,1}=0$.

Let us now assume that the second outcome of \eqref{two} holds. The two $\Q$ summands in gradings $(3,0)$ and $(3,1)$ must die in the spectral sequence. There are no incoming $d^r$ differentials from gradings $u=0$ or $u=-1$ otherwise a surviving generator is killed. And again by \eqref{iso} there are no terms in the bigradings $(q,2)$ or $(q,1)$ for $q>3$ with which they may cancel. It must be the case that $\kh^{3,0}(\overcrossing)=0$ and  $\kh^{3,1}=0$, and we conclude that $\kh(\overcrossing)$ is thin. The closed formula \eqref{formula} follows immediately from the discussion above.
\end{proof}

\begin{table}
\caption{A portion of the $E^1$ page of the spectral sequence with $u$-grading vertically and $q$-grading horizontally. The induced differentials $d^r$ for $r\geq1$ map the regions in question to the gray regions, whereas the incoming differentials come from the yellow regions.}
\label{E1}
\centering
	$\begin{array}{|c||c|c|c|c|c|c|c|c}
	\hline
	(n-5)+1 & & & & & & \Q & \Q  \\
	\hline
	 (n-5) & & & & & \Q & & \\
	 \hline
	\vdots	& &&&&&& \\
	\hline
	2	&&& \cellcolor{gray!25} & \cellcolor{gray!25} & \cellcolor{gray!25} & \cellcolor{gray!25} & \cellcolor{gray!25} \\
	\hline
	1 && \Q\text{ or } 0 & \cellcolor{gray!25} \Q\text{ or } 0 &\cellcolor{gray!25}&\cellcolor{gray!25} &\cellcolor{gray!25}& \cellcolor{gray!25}  \\
	\hline
	0	& \cellcolor{yellow!25}\Q	 &  \cellcolor{yellow!25} \Q^2\text{ or } \Q & \Q\text{ or } 0 &&&& \\
	\hline
	\hline
	 & -1 & 1 & 3 & \qquad \hdots \qquad & 2(n-5)-1 & 2(n-5)+1 &  \hdots \\
	 \hline
	\end{array} $
\end{table}

\begin{remark}
\label{z2thin}
Important to our application is the fact that the branched double cover of a reduced Khovanov thin knot with $\Ztwo$--coefficients is an L-space, which follows from the symmetry of Heegaard Floer homology under orientation reversal and the spectral sequence from reduced Khovanov homology of a link to the Heegaard Floer homology of the branched double cover of the mirror of the link \cite{OS:BDCs}. Notice that in the argument of Lemma \ref{khthin}, there is a single location not contained on the diagonals $\delta=\pm1$, and this is bigrading $(3,0)$. Had we used $\Z$-coefficients to write down the skein exact sequence, we would have seen
\begin{equation*}
\label{repeat}
0\longrightarrow \kh^{3,0}(\overcrossing) \longrightarrow q\Z^{(2,0)} \stackrel{\cdot u}{\longrightarrow} \cdots.
\end{equation*}
\end{remark}
Since $\kh^{(3,0)}(\overcrossing)$ injects, it is torsion-free, and the argument of Lemma \ref{khthin} shows there are no free summands in bigrading $(3,0)$. Thus $\kh(K_n;\Ztwo)$ is also thin, and therefore $K_n$ is reduced Khovavnov thin with $\Ztwo$ coefficients as well. We deduce that $\Sigma(K_n)$ is an L-space for all $n$.

\subsection{Proof of main theorem}
\label{sec:mainproof}
We now set about to prove that the infinite family of knots $\mathcal{K}$ satisfies the cosmetic crossing conjecture, amongst several other properties. Our main obstruction for a knot to admit a cosmetic crossing change is  

\begin{theorem}\cite[Theorem 2]{LidmanMoore}
Let $K$ be a knot in $S^3$ whose branched double cover $\Sigma(K)$ is an L-space. If each summand of the first singular homology of $\Sigma(K)$ has square-free order, then $K$ admits no cosmetic crossing changes.
\end{theorem}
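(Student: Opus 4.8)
The plan is to transfer the entire question into the branched double cover $\Sigma(K)$, where the L-space hypothesis and the control on $H_1$ can be brought to bear through Heegaard Floer homology. A crossing change is a rational tangle replacement inside a ball $B$ meeting $K$ in the two strands at the crossing, so by the Montesinos trick the branched double covers of the two knots $K_+$ and $K_-$ related by the change are both Dehn fillings of a single manifold $M$ with torus boundary, namely the double cover of the tangle complement. Equivalently, there is a knot $\widetilde{c} \subset M$ --- the core of the solid torus double-covering the crossing ball --- such that $\Sigma(K_+)$ and $\Sigma(K_-)$ arise from two distinct Dehn surgeries on $\widetilde{c}$. The hypothesis that the crossing change is cosmetic says precisely that $K_+$ and $K_-$ are isotopic as oriented knots, hence $\Sigma(K_+) \cong \Sigma(K_-)$ by an orientation-preserving homeomorphism; this exhibits a \emph{purely cosmetic surgery} on $\widetilde{c}$, with both fillings equal to the fixed L-space $\Sigma(K)$.

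First I would pin down the homological bookkeeping of the lift. Because $\operatorname{lk}(c,K)=0$, the crossing circle lifts to a two-component link cobounding the lift of the crossing disk, which controls the homology class of $\widetilde{c}$ and shows it is (rationally) null-homologous; the two filling slopes realizing $\Sigma(K_+)$ and $\Sigma(K_-)$ differ by the disk framing and are interchanged by the covering involution. Identifying these slopes --- and checking that the remaining filling recovers the branched double cover of the oriented resolution, so that the surgery exact triangle in Heegaard Floer homology is available --- is the first technical step. The point of assuming the crossing is \emph{not} nugatory is to guarantee that the two surgeries are genuinely distinct: a nugatory crossing is exactly the degenerate case in which the lifted surgery is trivial (the filling is reducible, or $\widetilde{c}$ bounds a disk), and one must verify that excluding it leaves a bona fide nontrivial cosmetic surgery to obstruct.

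The heart of the matter is then a Heegaard Floer obstruction to purely cosmetic surgery on a null-homologous knot in an L-space. Using that $\Sigma(K)$ is an L-space, I would feed $\widetilde{c}$ into the mapping-cone surgery formula: the correction terms $d(\Sigma(K),\mathfrak{s})$ of the two surgered manifolds must agree $\operatorname{Spin}^c$-structure by $\operatorname{Spin}^c$-structure, which on the one hand forces the two slopes to be related by an orientation-reversing symmetry (negatives of one another), and on the other hand --- via the Casson--Walker invariant, which for a rational homology sphere is recovered from the same correction terms --- forces a rigidity that a nontrivial surgery cannot meet. This is where the square-free hypothesis is indispensable: it is what lets one match $\operatorname{Spin}^c$ structures on $\Sigma(K)$ with those on the surgered manifolds through the linking form and conclude that the $d$-invariant identities admit no nontrivial solution; when $H_1$ carries a $\Z/p^2$ summand the linking form no longer pins the correspondence down and the arithmetic of the correction terms acquires spurious solutions.

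I expect the main obstacle to be precisely this last step: controlling the interaction between the $\operatorname{Spin}^c$ correspondence and the $d$-invariants under the square-free hypothesis. The topological reduction via the Montesinos trick is essentially formal once the lift and its framing are set up correctly, and the L-space condition makes the Floer package as simple as possible; but extracting a contradiction requires showing that the only way the correction terms and the Casson--Walker invariant can simultaneously be preserved is the trivial, nugatory one, and it is exactly the absence of square factors in $|H_1(\Sigma(K))|$ that eliminates the exceptional solutions. I would organize the writeup so that the cosmetic-surgery obstruction is isolated as a standalone statement about null-homologous knots in L-spaces, after which Theorem~\ref{ccc} follows from the reduction above.
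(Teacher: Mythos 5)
This paper does not prove the statement --- it quotes it from \cite{LidmanMoore} --- so the comparison is with Lidman--Moore's actual argument. Your reduction step matches theirs: the Montesinos trick converts the crossing change into a pair of fillings of the double cover of the tangle complement, i.e.\ into surgeries on the lifted crossing circle $\widetilde{c}\subset\Sigma(K)$ at slopes $\pm1$ with respect to the framing coming from the lifted crossing disk, and the cosmetic hypothesis gives $Y_{+1}(\widetilde{c})\cong Y_{-1}(\widetilde{c})\cong Y:=\Sigma(K)$. But from there your plan diverges in a way that fails. You aim for a \emph{contradiction} via a purely-cosmetic-surgery obstruction built from $d$-invariants and the Casson--Walker invariant. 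No such contradiction is available: nugatory crossings genuinely realize this surgery data (when $\widetilde{c}$ is unknotted, both fillings trivially return $Y$), so any obstruction phrased purely in terms of the homeomorphism types of the fillings would be false. Worse, the Ni--Wu-style machinery you invoke concludes only that cosmetic slopes are negatives of one another --- but here the two slopes are already $\pm1$, so that conclusion is vacuous; ruling out $\{+1,-1\}$ cosmetic pairs by $d$-invariants and Casson--Walker is exactly the part of the cosmetic surgery conjecture that this machinery is known not to settle. The correct endgame is not a contradiction but a structural conclusion: Lidman--Moore show that $\widetilde{c}$ must be \emph{unknotted} in $Y$, using the L-space condition together with the surgery exact triangle/mapping cone formula to pin down $\operatorname{rank}\widehat{HFK}(Y,\widetilde{c})$ and then invoking knot Floer detection of genus/the unknot for null-homologous knots. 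They then descend: an equivariant argument (resting on the resolution of the Smith conjecture) converts the disk bounded by $\widetilde{c}$ upstairs into a disk bounded by the crossing circle in the complement of $K$ downstairs, which is precisely the statement that the crossing is nugatory. This descent step is entirely absent from your proposal, and without it even a successful Floer argument upstairs proves nothing about the crossing.

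You also misplace the square-free hypothesis. It is not used to match $\operatorname{Spin}^c$ structures in $d$-invariant arithmetic; it is a homological pigeonhole used much earlier, to show that $[\widetilde{c}]=0$ in $H_1(Y;\Z)$. The annulus lifting the crossing disk only shows the two lifts of the crossing circle cobound, not that the class is trivial --- your parenthetical ``(rationally) null-homologous'' elides the real issue. Since $\pm1$ surgery on a class of order $d$ distorts the order of $H_1$ by a factor involving $d^2$, the requirement $|H_1(Y_{\pm1}(\widetilde{c}))|=|H_1(Y)|$ together with square-free summands forces $d=1$. Null-homologousness is then what licenses the integer-surgery mapping cone and the genus-detection results in the Floer step. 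So: the topological reduction in your proposal is right and agrees with the paper's source; the heart of the argument --- unknot detection upstairs plus equivariant descent, with square-freeness feeding the homology lemma --- is missing, and the obstruction you propose in its place cannot work in principle.
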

 
With this obstruction in hand, we prove 

\begin{theorem}
\label{mainexample}
The set $\mathcal{K}$ describes an infinite family of knots which have determinant 49 and non-cyclic $H_1(\Sigma(K);\Z)$. These knots are non-alternating, non-fibered, hyperbolic, of genus two, bridge number three, and satisfy the cosmetic crossing conjecture. 
\end{theorem}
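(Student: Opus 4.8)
The plan is to check each listed property in turn, reserving the computation of $H_1(\Sigma(K_n);\Z)$ as the decisive step. Several properties are immediate from what is already in hand. Since $\det(5_2)=7$ and $\det(K_n(J))=\det(J)^2$, every $K_n$ has determinant $49$. The knot $5_2$ has genus one and is not fibered (its Alexander polynomial $2t-3+2t^{-1}$ is not monic), so Corollary \ref{fibergenus} gives $g(K_n)=2g(5_2)=2$ and shows $K_n$ is not fibered for the even values $n\equiv0\pmod{14}$; Corollary \ref{notunknot} shows each $K_n$ is nontrivial, and Remark \ref{z2thin} shows $\Sigma(K_n)$ is an L-space. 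Finally, the family is genuinely infinite: in the closed formula \eqref{formula} of Lemma \ref{khthin} the support of $\kh(K_n)$ contains blocks whose homological gradings grow with $|n|$, so the bigraded groups $\kh(K_n)$ are pairwise non-isomorphic and the $K_n$ are pairwise distinct.

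The heart of the argument — the step I expect to be the main obstacle — is to show that $H_1(\Sigma(K_n);\Z)$ is non-cyclic for precisely the chosen residues. Because $\det(K_n)=49$, this group always has order $49$, hence is either $\Z/49\Z$ or $\Z/7\Z\oplus\Z/7\Z$; the problem is to decide which as a function of $n$. I would obtain a presentation matrix for $H_1(\Sigma(K_n);\Z)$ from a Goeritz matrix of the symmetric union diagram, or equivalently from the Montesinos description in which the symmetric tangle replacement is realized by surgery on the lift of the crossing circle of $T_n$. Using the mirror symmetry of the diagram together with integer row and column operations, I expect this to reduce to a $2\times2$ matrix of determinant $49$ with both diagonal entries equal to $7$ — reflecting the two copies of $\det(5_2)=7$ already present for $K_0=5_2\#m(5_2)$ — and a single off-diagonal entry that is linear in $n$. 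Reducing modulo $7$, the cokernel is $\Z/7\Z\oplus\Z/7\Z$ exactly when that entry is divisible by $7$, and is $\Z/49\Z$ otherwise. The congruence $n\equiv0\pmod{14}$ is then exactly the conjunction of the two constraints in play: $n$ even, already required for the genus and Alexander computations, together with $7\mid n$, required for non-cyclicity. Pinning down the off-diagonal entry precisely — in particular confirming that its linear coefficient is a unit modulo $7$ — is the delicate point, and I would cross-check it against $\Sigma(K_0)=L(7,q)\#L(7,-q)$ and the small examples already tabulated via \emph{KnotTheory`}.

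With the homology in hand the remaining topological properties follow. Non-cyclicity of $H_1(\Sigma(K_n);\Z)$ immediately rules out $K_n$ being a two-bridge knot, since two-bridge knots have lens space branched double covers; as symmetric unions of the two-bridge knot $5_2$ admit diagrams realizing three bridges, this pins the bridge number at exactly three. For non-alternating I would use the gap visible in \eqref{formula}: along each diagonal $\delta=\pm1$ the support of $\kh(K_n)$ is occupied near homological grading $0$ and again near $|n|$, but is empty in between, whereas the Khovanov homology of an alternating knot is supported in a contiguous range of homological gradings (equivalently, the Jones polynomial of an alternating knot has no interior gaps) by Lee \cite{Lee}; hence no $K_n$ in the family is alternating. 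Hyperbolicity I would establish by presenting the family as a twist family — the $K_n$ arise by $1/k$-surgery on the crossing circle of a fixed link $L$ — so that, once $S^3\setminus L$ is seen to be hyperbolic, Thurston's hyperbolic Dehn surgery theorem makes all but finitely many $K_n$ hyperbolic, with the finitely many remaining members verified in SnapPy; as a consistency check, non-fiberedness already shows no $K_n$ is a torus knot.

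The cosmetic crossing conjecture then follows by feeding these facts into Theorem \ref{ccc}: $\Sigma(K_n)$ is an L-space, and each summand of $H_1(\Sigma(K_n);\Z)=\Z/7\Z\oplus\Z/7\Z$ has the square-free order $7$, so $K_n$ admits no cosmetic crossing change. I would stress that the homology computation is doing double duty here: had $H_1(\Sigma(K_n);\Z)$ been the cyclic group $\Z/49\Z$, its single summand would have the non-square-free order $49$ and Theorem \ref{ccc} would not apply. This is why the divisibility-by-seven half of the condition $n\equiv0\pmod{14}$ is indispensable, and why I regard the non-cyclic homology computation as the crux of the proof.
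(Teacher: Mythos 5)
Your overall architecture matches the paper's: determinant and genus/fiberedness from Corollary \ref{fibergenus} applied to the genus-one, non-fibered knot $5_2$; the L-space condition from Lemma \ref{khthin} and Remark \ref{z2thin}; a Goeritz-matrix computation showing $H_1(\Sigma(K_n);\Z)\isom\Z/7\Z\oplus\Z/7\Z$ exactly when $7\mid n$ (the paper's reduced presentation matrix is $\left(\begin{smallmatrix}7&4n\\0&7\end{smallmatrix}\right)$, so the linear coefficient whose invertibility mod $7$ you flagged as the delicate point is $4$); the exclusion of two-bridge knots because their branched double covers are lens spaces; and the final appeal to Theorem \ref{ccc}. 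Two of your sub-arguments diverge from the paper, and one of them rests on a false premise.

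The non-alternating step fails as stated. It is not true that the Khovanov homology of an alternating knot is supported in a contiguous range of homological gradings, nor that the Jones polynomial of an alternating knot has no interior gaps: the right-handed trefoil has $V(t)=-t^4+t^3+t$, with vanishing $t^2$ coefficient, and $\kh(3_1)$ is supported in homological gradings $\{0,2,3\}$, skipping $u=1$. Lee's theorem forces only thinness and a knight-move pairing, and distinct knight-move pairs may sit in widely separated homological gradings --- which is exactly the phenomenon the blocks in \eqref{formula} exhibit, so your ``gap'' detects nothing. The paper instead uses that an alternating knot has crossing number bounded by its determinant, so only finitely many alternating knots have determinant $49$ and at most finitely many $K_n$ can be alternating; that is the argument you need. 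For hyperbolicity, your Thurston/Dehn-filling route is viable in principle but incomplete as sketched: Thurston's theorem gives no effective list of the exceptional fillings, so you cannot hand ``the finitely many remaining members'' to SnapPy without an additional bound (e.g.\ from the $6$-theorem), and you must separately certify that the augmented link complement is hyperbolic. The paper's argument is self-contained and covers every member of $\mathcal{K}$ at once: by Riley, a three-bridge knot is hyperbolic, a torus knot, or composite; the only genus-two torus knot is $T(5,2)$, excluded by the Alexander polynomial; and a composite $K_n$ would be a sum of two genus-one two-bridge knots whose double covers are lens spaces with $|H_1|=7$, forcing both summands to be $\pm 5_2$, which the Jones polynomial rules out.
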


\begin{proof}
By Theorem \ref{hfk}, for all $n$ even, $\hfk(K_n)\isom \hfk(5_2)\otimes \hfk(m(5_2))$ and for all $n$ odd, $\hfk(K_n) \isom \hfk(K_1)$. The knot Floer groups for $\hfk(K_n)$ for $n$ odd can be found after identifying $K_1$ as the alternating knot $10_{22}$, whose knot Floer homology is determined by its Alexander polynomial and signature. Represented as a Poincar\'e polynomial, the knot Floer homology groups are thus
\begin{equation}
\label{even}
P_{\hfk(K_n)}(s,m) = 4s^{-2}m^{-2} +12s^{-1}m^{-1} + 17 +12sm +4s^2m^2
\end{equation}
for $n$ even, and
\begin{equation}
\label{odd}
P_{\hfk(K_n)}(s,m) = 2s^{-3}m^{-3} + 6s^{-2}m^{-2} +10s^{-1}m^{-1} + 13 +10sm + 6s^2m^2 +2s^3m^3
\end{equation}
for $n$ odd. 
By Corollary \ref{fibergenus}, $K_n$ is non-fibered for all $n$ and of genus two when $n$ is even and genus three when $n$ is odd. 

Equations \ref{even} and \ref{odd} imply that $\det(K_n)=49$ for all $n$. This is also implied by \cite[Theorem 2.6]{Lamm:SymmetricRibbon} as well as Lemma \ref{bdc} below. Because there are only a finite number of alternating knots with any fixed determinant (see for instance \cite[Lemma 14]{MooreStarkston}), the knots $K_n$, for $n\in\Z$, are generically non-alternating. Lemma \ref{khthin} and Remark \ref{z2thin} imply $K_n$ is reduced Khovanov homology thin over $\Z/2\Z$ for all $n$, ensuring that $\Sigma(K_n)$ is an L-space for all $n$. 

The rest of the proof will follow after we verify Lemmas \ref{bdc}, \ref{bridge}, and \ref{hyperbolic}.

\begin{lemma}
\label{bdc}
For each $n$, the knot $K_n$ has $H_1(\Sigma(K_n);\Z) \cong \mathbb{Z}/7\mathbb{Z} \oplus \mathbb{Z}/7\mathbb{Z}$ if $n$ is a multiple of $7$ and $H_1(\Sigma(K_n);\Z) \cong \mathbb{Z}/49\mathbb{Z}$ otherwise. 
\end{lemma}
\begin{proof}
Recall that the Goeritz matrix associated to a checkerboard coloring of a knot diagram gives a presentation matrix for $H_1(\Sigma(K_n);\Z)$ \cite{GordonLitherland}. Indeed, to compute the Goeritz matrix of a knot diagram $K$, enumerate the white regions of a checkerboard coloring of $K$ by $X_1,\dots, X_m$, and define the symmetric $m\times m$ integral matrix $G'(K) = (g_{ij})$ by
\begin{equation*}
  	g_{ij} = \left\{
    	\begin{array}{ll}
	-\sum_{c\in X_{ij}}\eta(c) & i\neq j \\
	-\sum_{\ell\neq i} g_{i\ell} & i=j, 
    	\end{array} \right.
\end{equation*}
where the incidence numbers $\eta(c)$ are assigned as in Figure \ref{crossing} and $X_{ij} =  \overline{X}_i\cap\overline{X}_j$. The Goeritz matrix $G:=G(K)$ is then obtained by deleting the first row and column of $G'(K)$. It provides a presentation for $H_1(\Sigma(K);\Z)$ and $\det(K)=|\det{G}|$.
\begin{figure}
	\labellist
	\pinlabel $+1$ at 0 20
	\pinlabel $-1$ at 60 20
	\endlabellist
\includegraphics[width=3cm]{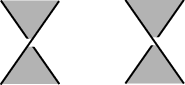}
\caption{Incidence numbers $\eta(c)$ assigned to each crossing in a checkerboard coloring.}
\label{crossing}
\end{figure}
From the diagram in Figure~\ref{sym52}, we obtain a Goeritz matrix presentation for $H_1(\Sigma(K_n);\Z)$,
\begin{equation*}
	\left( \begin{array}{ccccccc}
	4 & 0 & -1 & 0 \\
	0 & -4 & 0 & 1 \\
	-1 & 0 & 2-n & n \\
	0 & 1 & n & -n-2
	\end{array} \right).
\end{equation*}
It is straightforward to verify that this is equivalent to the presentation matrix
$
	\left( \begin{array}{ccc}
	7 & 4n \\
	0 & 7
	\end{array} \right).
$
This presents $\Z/7\Z \oplus \Z/7\Z$ if and only if $7$ divides $4n$, which is equivalent to $n$ being a multiple of $7$.  Otherwise, the matrix presents $\Z/49\Z$. 
\end{proof}
As in \cite{GW:Turaev}, we adopt the strategy of \cite[Lemmas 4 and 5]{Kanenobu:Infinitely} for the following two arguments.
\begin{lemma}
\label{bridge}
All $K_n$ with $n\equiv 0\pmod{7}$ have bridge number three. 
\end{lemma}
\begin{proof}
From the diagram in Figure \ref{sym52}, the $b(K_n)$ is bounded above by three for all $n$. Lemma \ref{bdc} implies that whenever $n \equiv 0 \pmod{7}$, the branched double cover of $K_{n}$ cannot be a lens space because its first homology is non-cyclic, thus $K_n$ cannot be a two-bridge knot by Hodgson-Rubinstein \cite{HR}. Alternatively, recall that two-bridge knots are alternating, so at most finitely many $K_n$ are two-bridge anyway.
\end{proof}

\begin{lemma}
\label{hyperbolic}
All $K_n$ with $n\equiv 0 \pmod{14}$ and $n\neq0$ are hyperbolic.
\end{lemma}
\begin{proof}
By Lemma \ref{bridge}, $b(K_n)=3$ and since $n$ is even, $g(K_n)=2$. By Riley \cite{Riley}, a three-bridge knot is either hyperbolic, a torus knot, or a connected sum. The only torus knot of genus two is the $(5,2)$--torus knot, and its Alexander polynomial distinguishes it from $K_n$ for all $n$. Suppose now $K_n$ is composite. Then $K_n = K \# K'$ for some knots $K$ and $K'$ each of genus one. Since
\begin{equation*}
	br(K\#K') = br(K) + br(K') -1,
\end{equation*}
this implies that $K$ and $K'$ are both two-bridge knots. The branched double cover $\Sigma(K\#K')$ is a nontrivial connected sum of lens spaces with $|H_1(\Sigma(K\#K';\Z)|=49$, so each summand must have order 7. By \cite[Proposition 12.26]{BZ}, a genus one, two-bridge knot or its mirror is of the form $\mathfrak{b}(\alpha, \beta)$, where 
\begin{equation*}
	\beta = 2c,  \qquad \alpha = 4bc \pm 1, \qquad b,c,\in \Z.
\end{equation*}
The branched double cover of $\mathfrak{b}(\alpha, \beta)$ is $L(\alpha, \beta)$, therefore $7=|H_1(L(\alpha,\beta);\Z)|=4bc\pm1$. The only integral solutions are when $b,c = 2,1$ or $b,c=1,2$, both of which correspond with the knot $5_2$ or its mirror. In this case the Jones polynomial distinguishes $K_n$ from connected sums of $5_2$ with itself or its mirror. There can be no such $K$ and $K'$, hence $K_n$ is prime. Since $K_n$ is neither a torus knot nor a connected sum, it must be hyperbolic. 
\end{proof}

Excluding $K_0$ and the finitely many knots $K_n$ which may be alternating, the properties in the statement of Theorem \ref{mainexample} are simultaneously satisfied whenever $n\equiv0\pmod{14}$. This completes the proof of the theorem. 
\end{proof}


\subsection{Observations}
\label{sec:observations}
We close with several observations.
\begin{remark}
For the sake of concreteness, we chose the knot $5_2$ with which to construct the set in \eqref{family}. However, one can carry out similar constructions using other base knots. For example, the partial knot of the pretzel knot $P=(p,q,-p)$, where $p$ is odd (see Figure \ref{fig:bandnumberone}), is the $(2,p)$--torus knot, which is reduced Khovanov homology thin. The general strategy of Lemma \ref{khthin} applies and was carried out by Starkston to investigate their Khovanov homology in \cite{Starkston}. A computation similar to that of Lemma \ref{bdc} would show that $H_1(\Sigma(P);\Z)\isom \Z_p\oplus \Z_p$ if and only if $p\mid q$. In this case, Theorem \ref{ccc} applies when $p$ is square-free, and we similarly obtain that such knots satisfy the cosmetic crossing conjecture. However, when $q$ is odd, $P$ is genus one, and when $q$ is even, $P$ is fibered. So no new information is gained with these pretzel knots, unlike the knots $K_n\in\mathcal{K}$. 
\end{remark}

\begin{remark}
The symmetric unions $K_n$, as well as the symmetric pretzel knots and Kanenobu knots have constant determinant and are Khovanov homology thin. Greene conjectured that there exist only finitely many quasi-alternating links with a given determinant \cite[Conjecture 3.1]{Greene}. We suspect that the present examples, like the Kanenobu knots and pretzel knots, also fail to be quasi-alternating, and that an argument similar to that made by Greene and Watson in \cite{GW:Turaev} for the case of the Kanenobu knots can be made.
\end{remark}

\begin{remark}
Recall that an \emph{L-space knot} is a knot which admits a positive Dehn surgery to an L-space. Because the knots $K_n$ are obtained by rational tangle replacement in $K_0=5_2\#m(5_2)$, there exists a knot $\tilde{\gamma}$ in $\Sigma(K_0)$ which admits surgeries to the L-space $\Sigma(K_n)$ for all $n$. In particular, this knot $\tilde{\gamma}$ is the lift of a crossing arc $\gamma$ in the trivial $0$--tangle $T_0\subset K_0$. Since $\Sigma(K_0)$ is the connected sum of lens spaces $ L(7,2)\# L(7,3)$, we therefore observe that the lift $\widetilde{\gamma}\in  L(7,2)\# L(7,3)$ is an example of an L-space knot in a reducible L-space. An alternate proof to Lemma \ref{bdc} could be obtained by studying presentation matrices for $H_1(\Sigma(K_n);\Z)$ where $\Sigma(K_n)$ is obtained by Dehn surgery along the primitive curve $\widetilde{\gamma}\in  L(7,2)\# L(7,3)$.
\end{remark}

\section*{Acknowledgement}
The author is especially grateful to Tye Lidman for his interest. She also thanks Laura Starkston and Liam Watson for helpful correspondence. This work is supported by NSF grant DMS-1148609.

\bibliographystyle{alpha}
\bibliography{bibliography}

\end{document}